\documentclass{amsart}
\usepackage{amsaddr}
\usepackage{amsfonts,amssymb,amsmath,amsthm}
\usepackage{url}
\usepackage{enumerate}
\usepackage{hyperref}
\usepackage{graphicx}
\usepackage{siunitx}
\usepackage{textcase}
\usepackage{float}
\restylefloat{figure}
\urlstyle{sf}
\newtheorem{theorem}{Theorem}[]

\newtheorem{problem}{Problem}
\newtheorem{conjecture}{Conjecture}
\newtheorem{proposition}{Proposition}

\author{Orgil-Erdene Erdenebaatar \MakeLowercase{\and} Uuganbaatar Ninjbat}
\address{
Department of Mathematics \\
The National University of Mongolia, Ikh Surguuliin gudamj-1, 14201\\
Ulaanbaatar, Mongolia.}
\email{e.orgio0909@gmail.com\and uuganbaatar@num.edu.mn}
\thanks{To appear in Elemente der Mathematik}

\keywords{Finite packing, Smallest polygon, Wegner inequality, Maclaurin trisectrix, Parallel polygons, Erd\H{o}s-Oler conjecture}
\subjclass[2010]{52C15, 90C27, 52A27}
\begin{document}
\title[The smallest convex $k$-gon containing $n$ congruent disks]{The smallest convex $k$-gon containing $n$ congruent disks}
\begin{abstract}
Consider the problem of finding the smallest area convex $k$-gon containing $n\in \mathbb{N}$ congruent disks without an overlap. By using Wegner inequality in sphere packing theory we give a lower bound for the area of such polygons. For several cases where this bound is tight we construct corresponding optimal polygons. We also discuss its solution for some cases where this bound is not tight, e.g. $n=2$ and $k$ is odd, and $n=3, k=4$. On the way to prove our results we prove a result on geometric invariants between two polygons whose sides are pairwise parallel, and give a new characterisation for the trisectrix of Maclaurin.
\end{abstract}
\maketitle

\section{Introduction}
\label{intro}
Most finite sphere packing problems fall into two types \cite{handbook}.
\begin{itemize}
\item \textit{Free packing}: Locate a finite set of congruent balls in the space so that the volume of their convex hull is minimal. In the two dimensional space, notable results on this problem are the Thue-Groemer and Oler inequalities (see Chap. 4.3 in \cite{finite-pack}, \cite{folkman}) and the Wegner inequality (see Theorem \ref{wegner}). In higher dimensions, L.F. T\'{o}th's sausage conjecture is a partially solved major open problem \cite{sausage}.
\item \textit{Bin packing}: Locate a finite set of congruent balls in the smallest volume container of a specific kind. In the two dimensional space, the container is usually a circle \cite{fodor}, an equilateral triangle \cite{pack-tri} or a square \cite{pack-sq}. In such cases, the smallest containers and the corresponding optimal packings are known when the number of disks is not so big, e.g. up to 20 \cite{graham}.
\end{itemize}

We study the following finite packing problem which contains elements of both types. 
\begin{problem}
\label{main-pr}
For $k\geq 3$ find the smallest area convex $k$-gon containing $n\in \mathbb{N}$ unit radius ('unit' in brief) disks without an overlap.
\end{problem}
The solution to this problem for $n=1$ is given by the following well-known result (see Chap. 2 in \cite{andreescu}). 
\begin{theorem}
\label{1-thm}
When $n=1$ the regular $k$-gon circumscribing a unit disk is the only solution to Problem \ref{main-pr} for all $k\geq 3$. 
\end{theorem}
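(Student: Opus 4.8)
The plan is to reduce an arbitrary containing polygon to one that is \emph{circumscribed} about the disk (tangent on every side) and then to minimise the resulting area expression by convexity. Place the unit disk $D$ with centre $O$; since $D\subseteq P$ and $D$ has positive radius, $O$ lies in the interior of the convex $k$-gon $P$, so $P=\bigcap_{i=1}^{k}\{x:\,x\cdot u_i\le d_i\}$, where $u_i$ is the outward unit normal to side $i$ and $d_i$ is the distance from $O$ to that side. The containment $D\subseteq P$ is then equivalent to $d_i\ge 1$ for every $i$. First I would replace $P$ by $Q=\bigcap_{i=1}^{k}\{x:\,x\cdot u_i\le 1\}$, obtained by sliding each side inward until it touches $D$. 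Then $Q\subseteq P$, so $\mathrm{Area}(Q)\le \mathrm{Area}(P)$, and $Q$ is a bounded convex polygon with some number $k'\le k$ of sides, each tangent to $D$.

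Next I would compute the area of the circumscribed polygon $Q$. Writing the contact points at polar angles $\phi_1<\dots<\phi_{k'}$ and setting $\beta_i=\phi_{i+1}-\phi_i$ (cyclically, so $\sum_i\beta_i=2\pi$ and each $\beta_i\in(0,\pi)$ by boundedness), the region cut off between two consecutive contact points and their common vertex is a kite with two unit legs and right angles at the contact points, hence of area $\tan(\beta_i/2)$. Summing gives $\mathrm{Area}(Q)=\sum_{i=1}^{k'}\tan(\beta_i/2)$. Since $\beta\mapsto\tan(\beta/2)$ is strictly convex on $(0,\pi)$, Jensen's inequality yields $\mathrm{Area}(Q)\ge k'\tan(\pi/k')$, with equality only when all $\beta_i$ are equal, i.e. when $Q$ is the regular circumscribed $k'$-gon.

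It remains to compare across the number of sides. The function $f(t)=t\tan(\pi/t)$ is strictly decreasing for $t\ge 3$ (write it as $\pi\,\tan(u)/u$ with $u=\pi/t$ and use that $\tan(u)/u$ increases on $(0,\pi/2)$), so $k'\le k$ gives $f(k')\ge f(k)$. Chaining the estimates, $\mathrm{Area}(P)\ge\mathrm{Area}(Q)\ge f(k')\ge f(k)=k\tan(\pi/k)$, which is precisely the area of the regular $k$-gon circumscribing $D$.

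For uniqueness I would trace the equality case back through the chain. Equality $\mathrm{Area}(P)=\mathrm{Area}(Q)$ together with $Q\subseteq P$ forces $P=Q$, hence $d_i=1$ for all $i$ and $P$ is circumscribed; as $P$ genuinely has $k$ sides, this gives $k'=k$; and equality in Jensen then forces all $\beta_i=2\pi/k$, so $P$ is the regular circumscribed $k$-gon. The step I expect to be the most delicate is exactly this reduction-and-uniqueness bookkeeping: one must be sure that sliding the sides inward never \emph{increases} the area even when it eliminates a side (which is what the monotonicity of $f$ guarantees), and that each inequality in the chain is strict unless $P$ is already regular. The area formula for $Q$ and the convexity estimate, by contrast, are routine once the circumscribed normal form is in place.
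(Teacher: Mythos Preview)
Your argument is correct. The paper does not actually prove this theorem; it is stated as a well-known result with a reference to Chapter~2 of Andreescu--Mushkarov--Stoyanov, and your Jensen-based proof (reduce to a circumscribed polygon by pushing each half-plane inward, write the area as $\sum_i\tan(\beta_i/2)$, apply strict convexity of $\tan$ on $(0,\pi/2)$, and use the strict monotonicity of $t\mapsto t\tan(\pi/t)$ to absorb any sides lost in the reduction) is precisely the standard argument one finds in that reference, with the equality case traced back correctly through each step.
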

In Theorem \ref{n-thm} of this paper we give an extension of this result as an inequality bounding area of the containing polygon from below. This inequality is tight in many cases including  
\begin{itemize}
\item $n=1$ and $k\geq 3$,
\item $n=2$ and $k=2k'$ for $k'\geq 2$, 
\item $n\in \{3, 6\}$ and $k=3k'$ and 
\item $n$ is a centered hexagonal number and $k=6k'$.
\end{itemize}
The solution of Problem \ref{main-pr} for these tight cases is obtained in Theorem \ref{6k-thm}. Then we discuss its solution for other cases where this bound is not binding; for $n=2, k=2k'+1$ in the remark following Theorem \ref{6k-thm}, and for $n=3, k=4$ in Theorem \ref{non-wegner}. The latter case is essential as it demonstrates the possibility of disks being packed non-efficiently inside the minimal polygon. 

Along the way to prove our main results, we prove two intermediate results which are interesting on their own. The first one gives geometric invariants between two polygons whose sides are pairwise parallel; see Proposition \ref{parallel}. The second one gives a simple geometric characterisation for a well known curve, the trisectrix of Maclaurin;  see Proposition \ref{maclaurin}. In the final section we discuss some open problems.

\section{Preliminaries}
In addition to the usual ones, we use the following definitions. A {\em region} is a subset of the plane with finite area and when $X$ is a region, $\|X\|$ denotes its area. The line segment connecting points $A, B$ is denoted as $AB$, $|AB|$ is its length and $(AB)$ is its interior. Let a finite set of unit disks be located in $\mathbb{R}^{2}$ without an overlap, i.e. each pair has disjoint interior. Their {\em joint tangent} is a line which is tangent to at least two of the disks and supports their convex hull. Each joint tangent bounds a half plane which contains the disks. Intersection of these half planes is called as {\em tangent polygon} of the disks (see Fig. \ref{basic} $(a)$, $(b)$). Clearly, every tangent polygon is convex and it is a convex polygon as long as the centres of the disks are not all collinear.
\begin{figure}[h]
\centering
\includegraphics[height=2.0 in, keepaspectratio = true]{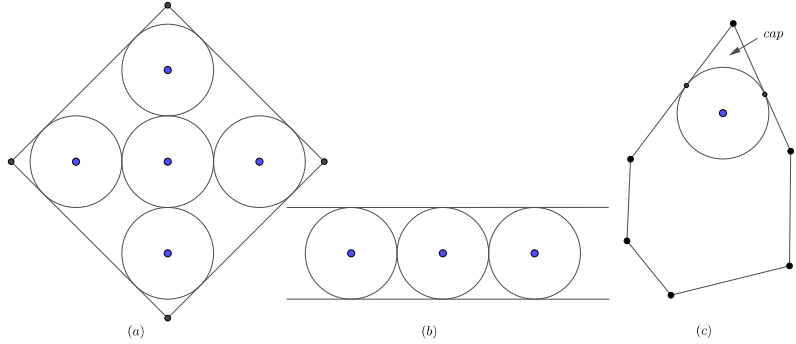}
\caption{Tangent polygons (a quadrilateral and an infinite strip) and a cap.}
\label{basic}
\end{figure}

The regular $k$-gon circumscribing a unit disk is called as {\em unit $k$-gon}. Thus, unit triangle has sides of length $2\sqrt{3}$, while unit square has sides of length $2$. {\em $\frac{1}{m}'$th of a regular $mk$-gon} is a polygon obtained after cutting the original polygon by two apothems intersecting at $\frac{2\pi}{m}$ angle; see Fig. \ref{cut}.
\begin{figure}[h]
\centering
\includegraphics[height=1.5 in, keepaspectratio = true]{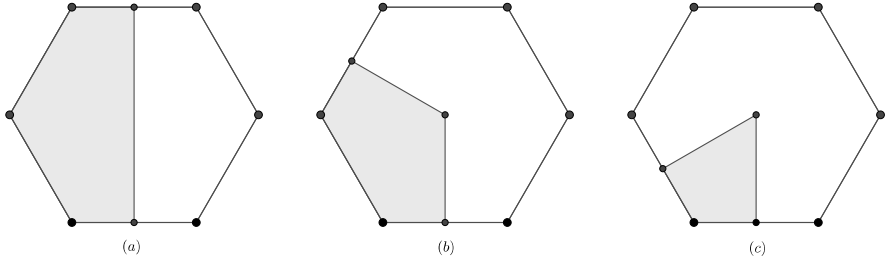}
\caption{Half, one-third, one-sixth of a regular hexagon.}
\label{cut}
\end{figure}

Let $C\subset \mathbb{R}^{2}$ be a convex disk and $P$ be a convex polygon containing $C$. Using a terminology in Pach and Agarwal \cite{pach}, a {\em cap of $P$ w.r.t. $C$} is the region enclosed by the boundary of $C$ and two consecutive sides of $P$ which are tangent to $C$; see Fig. \ref{basic} $(c)$. $\|Cap_{C}(P)\|$ denotes the sum of areas of all caps of $P$ w.r.t. $C$. 

A set of $n$ unit disks constitute a {\em Groemer packing} if each pair has disjoint interior and the convex hull of their centres is either a line segment of length $2(n-1)$ or can be triangulated into equilateral triangles of edge length two using the $n$ centres as vertices \cite{groemer}. If in addition, perimeter of the hull is $2\left \lceil{\sqrt{12n-3}-3}\right \rceil$, where $\left \lceil{x}\right \rceil=\min\{z\in \mathbb{Z}: z\geq x\}$, then the Groemer packing is a {\em Wegner packing} \cite{boroczky-ruzsa}; see Fig. \ref{wegner-fig}. By using these geometric properties, it is easy to show that the convex hull of the centres of the disks in a Wegner packing has at most six sides; see Chap. 4.3 in \cite{finite-pack}. $n\in\mathbb{N}$ is called {\em exceptional} if there is no Wegner packing of $n$ unit disks. The smallest exceptional number is $121$ and they constitute less than 5\% of all $\mathbb{N}$ \cite{boroczky-ruzsa}. The following result known as Wegner inequality; see Chap. 4.3 in \cite{finite-pack}, \cite{boroczky-ruzsa}.
\begin{theorem}(Wegner inequality)
\label{wegner}
If $H$ is the convex hull of $n\in\mathbb{N}$ non-overlapping unit disks then \[\|H\|\geq \sqrt{12}\cdot(n-1)+(2-\sqrt{3})\cdot \left \lceil{\sqrt{12n-3}-3}\right \rceil+\pi.\]
Equality holds if and only if the disks are packed in a Wegner packing.
\end{theorem}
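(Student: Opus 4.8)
The plan is to transport the statement about the hull $H$ of the disks into a statement about the convex hull $K$ of their $n$ centres, which are pairwise at distance at least $2$. Since taking convex hulls commutes with Minkowski sums, $H$ is exactly the outer parallel body $K\oplus B$, where $B$ is the closed unit disk; hence the Steiner formula gives $\|H\|=\|K\|+p+\pi$, where $p=\mathrm{per}(K)$. It therefore suffices to bound $\|K\|$ and $p$ from below in terms of $n$ alone, and the whole problem reduces to one about a planar point set of minimum pairwise distance $2$.

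First I would invoke Oler's inequality for this point set in the normalised form $\|K\|\geq \sqrt{12}\,(n-1)-\tfrac{\sqrt{3}}{2}\,p$, with equality precisely when the centres form a Groemer packing (the degenerate collinear case, where $K$ is a segment, being included). Substituting into the Steiner identity yields
\[
\|H\|\;\geq\;\sqrt{12}\,(n-1)+\Bigl(1-\tfrac{\sqrt{3}}{2}\Bigr)\,p+\pi\;=\;\sqrt{12}\,(n-1)+\tfrac{2-\sqrt{3}}{2}\,p+\pi .
\]
Because the coefficient $\tfrac{2-\sqrt{3}}{2}$ is strictly positive, any lower bound for $p$ can now be fed in without reversing the inequality; feeding in $p\geq 2\left\lceil\sqrt{12n-3}-3\right\rceil$ produces exactly the claimed bound.

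The \emph{main obstacle} is this perimeter estimate $p\geq 2\left\lceil\sqrt{12n-3}-3\right\rceil$, a discrete isoperimetric inequality asserting that among all sets of $n$ points with minimum pairwise distance $2$ the hexagonally packed arrangement minimises the perimeter of the convex hull. For a Groemer packing it is transparent: triangulating the hull into equilateral triangles of edge $2$ and applying Euler's formula yields $t=2n-b-2$, relating the number $t$ of triangles to the number $b$ of boundary centres (so that $\|K\|=\sqrt{3}\,t$ and $p=2b$), whence minimising $b$ becomes a lattice-polygon problem solved by the "most hexagonal" shape. For a centred hexagonal number $n=3m^{2}+3m+1$ one then checks the clean identity $\sqrt{12n-3}=6m+3$, and the ceiling together with the intermediate values of $n$ is handled by interpolating between consecutive hexagonal numbers. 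The genuinely delicate part is to rule out that some non-lattice configuration attains a strictly shorter hull perimeter; establishing that the extremal configurations are the hexagonal ones is the crux, and it is here that the interplay between minimum distance and convexity must be exploited.

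Finally, for the equality characterisation I would track both inequalities simultaneously. Equality in Oler's inequality forces the centres into a Groemer packing, while equality in the perimeter estimate forces $p=2\left\lceil\sqrt{12n-3}-3\right\rceil$. By the very definition of a Wegner packing, these two conditions together are equivalent to the centres forming a Wegner packing; in particular, for exceptional $n$ no such packing exists and the inequality is strict, consistently with the stated characterisation.
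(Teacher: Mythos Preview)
The paper does not supply a proof of this statement; Theorem~\ref{wegner} sits in the Preliminaries section and is simply quoted from the literature (B\"or\"oczky's monograph and the B\"or\"oczky--Ruzsa note). There is therefore no paper-proof to compare your attempt against.

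On its own merits your outline is the standard route and is correctly assembled: the identity $H=K\oplus B$ and the Steiner formula $\|H\|=\|K\|+p+\pi$ are right, your normalisation of Oler's inequality is right, and the substitution yielding $\|H\|\ge\sqrt{12}(n-1)+\tfrac{2-\sqrt{3}}{2}\,p+\pi$ is right. You also correctly isolate the perimeter estimate $p\ge 2\lceil\sqrt{12n-3}-3\rceil$ as the crux, and your Euler-formula count for the Groemer case is the right mechanism there.

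The one genuine gap is exactly where you say it is, but it is larger than your sketch lets on. For \emph{non}-Groemer configurations you have strict inequality in Oler, while the perimeter could in principle dip below the Groemer minimum; the ceiling in the target bound is tied to the even-integer perimeters of Groemer packings and has no obvious meaning otherwise. So one cannot just ``feed in'' the perimeter bound unless one first proves either that it holds universally for all minimum-distance-$2$ point sets, or that any perimeter deficit is always outweighed by the Oler slack. That trade-off is precisely the substance of Wegner's argument (and of the later refinements you would find in the cited references), and it is not a formality. Your equality analysis, given the paper's definition of a Wegner packing as ``Groemer $+$ perimeter $=2\lceil\sqrt{12n-3}-3\rceil$'', is clean and correct.
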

\begin{figure}[h]
\centering
\includegraphics[height=1.5 in, keepaspectratio = true]{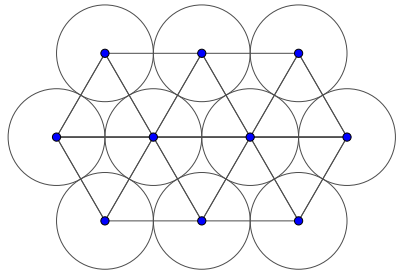}
\caption{Wegner packing of $n=10$ disks.}
\label{wegner-fig}
\end{figure}

The following result is a reliable tool when one studies the smallest circumscribing polygons of convex figures \cite{chakerian, zalgaller}.
\begin{theorem}
\label{area}
Let $C\subset \mathbb{R}^{2}$ be a convex disk and $P$ be the smallest area convex polygon containing it. Then, midpoints of sides of $P$ lie on the boundary of $C$.
\end{theorem}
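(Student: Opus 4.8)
The plan is to treat $P$ as a minimal-area circumscribing $k$-gon (for its own number of sides $k$, which is the only reading that makes the statement nonvacuous) and to pin down each contact point by a coordinatewise first-order variation. First I would record two preliminary reductions. Existence of a minimizer follows from a routine compactness argument: among convex $k$-gons containing $C$ whose area does not exceed that of one fixed circumscribing polygon, the vertices stay in a bounded region (a vertex escaping to infinity would force the convex hull of $C$ and that vertex, hence the area, to blow up), so the admissible family is compact and the continuous area functional attains its minimum. Next, at a minimizer every side must touch $C$: if some side lay on a line $\ell$ disjoint from $C$, then replacing the half-plane bounded by $\ell$ by a slightly smaller parallel half-plane still containing $C$ yields a convex $k$-gon $P'\subseteq P$ with $\|P'\|<\|P\|$, contradicting minimality. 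Hence it remains only to show that the contact point $T$ on each side is the midpoint of that side.

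For this I would argue one side at a time. Fix all sides of $P$ except one, say the side lying on the supporting line $\ell$ and meeting its two neighbours at vertices $V_1,V_2$; let $W$ be the intersection of the two neighbouring side-lines. Since the polygon is obtained from the fixed corner at $W$ by removing the triangle $WV_1V_2$, we have $\|P\|=\mathrm{const}-\|WV_1V_2\|$, where the constant depends only on the fixed sides. Thus, holding the rest of $P$ fixed, minimality of $\|P\|$ is equivalent to $\ell$ being the supporting line of $C$ inside the angle $\angle V_1WV_2$ that \emph{maximises} the cut-off triangle $WV_1V_2$.

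The heart of the matter is the following variational lemma: among supporting lines of $C$ crossing both sides of the angle, the maximiser of $\|WV_1V_2\|$ touches $C$ at the midpoint of $V_1V_2$. To see this, let $T\in\partial C$ be the contact point of the maximising line and consider the nearby supporting line $\ell'$ whose contact point is displaced along $\partial C$. For a smooth, positively curved boundary point a short computation shows that, to first order, $\ell'$ is obtained from $\ell$ by a pure rotation about $T$ (the accompanying translation being of second order). Rotating the segment $V_1V_2$ about $T$ by a small angle $d\theta$ sweeps two thin sectors of radii $|TV_1|$ and $|TV_2|$ on opposite sides, so the first-order change of the triangle area is $\tfrac12\big(|TV_2|^2-|TV_1|^2\big)\,d\theta$. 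Since $d\theta$ may have either sign and $T$ maximises the area, this first-order term must vanish, giving $|TV_1|=|TV_2|$; that is, $T$ is the midpoint of $V_1V_2$, and $T\in\partial C$ by construction. Applying this to every side of the minimal polygon completes the argument.

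The step I expect to be the main obstacle is making the variational lemma fully rigorous for a general convex body rather than a smooth strictly convex one. At a boundary point with positive curvature the computation above is clean, but two degenerate situations need separate handling: when the contact point is a vertex of $C$ (there the rotation about $T$ is exact, and one still gets $|TV_1|=|TV_2|$ unless the optimal rotation is blocked at the boundary of the normal cone), and when $\ell$ meets $C$ along a flat edge (where the midpoint automatically lies on $\partial C$). For the application to disks, whose boundary is smooth with everywhere positive curvature, none of these degeneracies arise and the first-order argument applies verbatim.
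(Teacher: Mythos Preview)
The paper does not prove this theorem; it is quoted in the Preliminaries as a known tool, with references to Chakerian--Lange and Zalgaller, and is then used as a black box in the proofs of Theorems~\ref{n-thm} and~\ref{non-wegner}. So there is no ``paper's own proof'' to compare against.

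That said, your outline is the classical variational argument one finds in those references. The two reductions (compactness for existence; pushing in any non-touching side) are standard. The core step---freezing all sides but one, rewriting $\|P\|=\text{const}-\|WV_1V_2\|$ via the apex $W$ of the neighbouring side-lines, and then computing the first variation of the cut-off triangle under rotation of the supporting line about its contact point---is exactly the textbook route, and your first-order balance $\tfrac12(|TV_2|^2-|TV_1|^2)\,d\theta=0$ is the right computation. One small gap worth patching: you implicitly assume the two neighbouring side-lines are not parallel (so that $W$ exists); in the parallel case the ``const $-$ triangle'' decomposition fails, but the same rotation argument applied directly to $\|P\|$ still gives the midpoint condition. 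Your discussion of the non-smooth degeneracies (corner contact, flat edge) is accurate, and since every application in this paper has $C$ equal to a single disk or a convex hull of disks (piecewise $C^\infty$ boundary with no relevant pathology at the contact points), the smooth case you work out suffices here.
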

If an internal angle at a vertex of a polygon is greater than $\pi$, then the vertex is {\em reflex}. If $P=A_{1}...A_{n}$ and $Q=B_{1}...B_{n}$ are two simple polygons with the same orientation and $A_{i}A_{i+1}\parallel B_{i}B_{i+1}$ for all $1\leq i\leq n$, then they are called {\em parallel polygons}. Finally, the {\em Maclaurin trisectrix} is a cubic plane curve defined as the locus of the point of intersection of two lines, each rotating at a uniform rate about separate points, so that the ratio of the rates of rotation is 1:3 and the lines initially coincide with the line passing the two points. Its polar equation is $r=a\sec \frac{\theta}{3}$ and Cartesian equation is $y^{2}=\frac{x^{2}(x+3a)}{a-x}$ \cite{encyclo}.

\section{The main results}
\label{main}
We shall prove two intermediate results. 
\begin{proposition}
\label{parallel}
Let $P$ and $Q$ be two simple parallel polygons. Then 
\begin{itemize}
\item[(a)] They have the same number of reflex vertices, and 
\item[(b)] If one is convex so is the other and their corresponding internal angles are equal.
\end{itemize}
\end{proposition}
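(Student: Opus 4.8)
The plan is to encode each polygon by its sequence of turning (exterior) angles and exploit the fact that a simple polygon traversed in the positive orientation has total turning $2\pi$. First I would fix the common orientation (say counterclockwise) and, walking around $P=A_1\dots A_n$, record the direction of edge $A_iA_{i+1}$ as a continuously varying (lifted) angle $\gamma_i\in\mathbb{R}$, so that the turning angle at vertex $A_{i+1}$ is $\sigma_i:=\gamma_{i+1}-\gamma_i\in(-\pi,\pi)\setminus\{0\}$, with $\gamma_{n+1}=\gamma_1+2\pi$ and $\sum_i\sigma_i=2\pi$. A vertex is reflex precisely when its interior angle exceeds $\pi$, i.e. when $\sigma_i<0$. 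I would do the same for $Q$, obtaining lifted directions $\delta_i$ and turning angles $\rho_i:=\delta_{i+1}-\delta_i$, again with $\sum_i\rho_i=2\pi$ and $\delta_{n+1}=\delta_1+2\pi$.

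Next I would bring in parallelism. Since $A_iA_{i+1}\parallel B_iB_{i+1}$, the two edge directions agree modulo $\pi$, so $\delta_i-\gamma_i=m_i\pi$ for integers $m_i$. Comparing the closure relations gives $m_{n+1}\pi=(\delta_1+2\pi)-(\gamma_1+2\pi)=m_1\pi$, so $m_{n+1}=m_1$ and the sequence $(m_i)$ is cyclic. Subtracting yields the key identity $\rho_i=\sigma_i+d_i\pi$, where $d_i:=m_{i+1}-m_i$. Because both $\sigma_i$ and $\rho_i$ lie in $(-\pi,\pi)$, each $d_i\in\{-1,0,1\}$, and cyclicity of $(m_i)$ forces $\sum_{i=1}^n d_i=0$.

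Then a short sign analysis finishes part (a). At a vertex with $d_i=0$ we have $\rho_i=\sigma_i$, so $P$ and $Q$ share the same convex/reflex status there. At a vertex with $d_i=1$, the requirement $\rho_i=\sigma_i+\pi\in(-\pi,\pi)$ forces $\sigma_i<0<\rho_i$, so $P$ is reflex while $Q$ is convex; symmetrically $d_i=-1$ makes $P$ convex and $Q$ reflex. Writing $a$ and $b$ for the numbers of vertices with $d_i=1$ and $d_i=-1$, the relation $\sum d_i=a-b=0$ gives $a=b$; since the $d_i=0$ vertices contribute equally to both reflex counts, $P$ and $Q$ have the same number of reflex vertices. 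For part (b), if $P$ is convex then every $\sigma_i>0$, so no vertex has $d_i=1$, i.e. $a=0$; then $b=a=0$, whence all $d_i=0$ and $\rho_i=\sigma_i$ for every $i$. Thus $Q$ has all positive turning angles (so is convex) and the same turning angles as $P$, which is equivalent to equal interior angles ($\pi-\sigma_i$) at corresponding vertices.

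The main obstacle I anticipate is exactly that ``parallel'' permits corresponding edges to point in opposite directions, so one cannot simply assert that the turning angles coincide edge by edge; the integer lift $m_i$ together with the closure constraint $m_{n+1}=m_1$ is the device that controls these reversals and forces the convex/reflex ``swaps'' to cancel in pairs. A secondary point to keep clean is the genuineness of the vertices (no straight or degenerate angles), which keeps every $\sigma_i,\rho_i$ strictly inside $(-\pi,\pi)$ and away from $0$, so the dichotomy ``reflex $\Leftrightarrow$ turning angle negative'' is unambiguous.
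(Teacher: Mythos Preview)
Your proof is correct and follows essentially the same strategy as the paper's: both arguments rest on the observation that parallelism forces corresponding angles to agree or to differ by exactly $\pi$, and then use an angle-sum identity (the paper uses the interior-angle sum $(n-2)\pi$, you use total turning $2\pi$) to show that the $+\pi$ and $-\pi$ discrepancies occur equally often. Your turning-angle/lift formulation is a bit more elaborate in its bookkeeping, but the underlying idea and the deduction of part~(b) from $|I_1|=|I_2|=0$ (in your notation $a=b=0$) are identical.
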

\begin{proof}
Let $P=A_{1}...A_{n}$ and $Q=B_{1}...B_{n}$ and we denote their internal angles as $\measuredangle A_{i}=\alpha_{i}$, $\measuredangle B_{i}=\beta_{i}$ for $1\leq i\leq n$. Let $I=\{i\in\mathbb{N}: 1\leq i\leq n, \alpha_{i}=\beta_{i}\}$, $I_{1}=\{i\in\mathbb{N}: 1\leq i\leq n, \alpha_{i}<\beta_{i}\}$ and $I_{2}=\{i\in\mathbb{N}: 1\leq i\leq n, \alpha_{i}>\beta_{i}\}$. Since $A_{i}A_{i+1}\parallel B_{i}B_{i+1}$ for $1\leq i\leq n$, $i\in I_{1}$ implies $\alpha_{i}+\pi=\beta_{i}$ and $i\in I_{2}$ implies $\alpha_{i}=\beta_{i}+\pi$.

Note that \[\sum_{i\in I}\alpha_{i}+\sum_{i\in I_{1}}\alpha_{i}+\sum_{i\in I_{2}}\alpha_{i}=\sum_{i\in I}\beta_{i}+\sum_{i\in I_{1}}\beta_{i}+\sum_{i\in I_{2}}\beta_{i}=(n-2)\pi.\] Since $\sum_{i\in I}\alpha_{i}=\sum_{i\in I}\beta_{i}$, $\sum_{i\in I_{1}}(\alpha_{i}+\pi)=\sum_{i\in I_{1}}\beta_{i}$ and $\sum_{i\in I_{2}}\alpha_{i}=\sum_{i\in I_{1}}(\beta_{i}+\pi)$, we get \[\sum_{i\in I_{1}}\alpha_{i}+\sum_{i\in I_{2}}\beta_{i}+|I_{2}|\pi=\sum_{i\in I_{1}}\alpha_{i}+|I_{1}|\pi+\sum_{i\in I_{2}}\beta_{i}\]
which simplifies to $|I_{1}|=|I_{2}|$. This proves Proposition \ref{parallel} (a) after noting that $|I_{1}|$ is the number of reflex vertices in $Q$ whose corresponding vertex in $P$ is normal, i.e. non-reflex, and $|I_{2}|$ is the number of reflex vertices in $P$ whose corresponding vertex in $Q$ is normal.

Assume $P$ is convex. Then, by part (a) both polygons have $0$ reflex vertices. Hence, $Q$ is convex. Since $n=|I|+|I_{1}|+|I_{2}|$ and $|I_{1}|=|I_{2}|=0$, we have $n=|I|$, i.e. $\alpha_{i}=\beta_{i}$ for all $1\leq i\leq n$. 
\end{proof}
\noindent{\em Remarks}: There seem to be a slight confusion in the computational geometry literature regarding to geometric invariants between parallel polygons. For example, on p.2 \cite{guibas} it is (mistakenly) claimed that ``two polygons are parallel iff they have the same sequence of angles". The above result clarifies the situation.

Let $P$ be a convex $k$-gon containing $n$ unit disks without an overlap such that each side of $P$ is tangent to at least one of the disks. Let us pick one of the disks and for each side of $P$ there are two tangents to the disk which are parallel to it. Choose the one which is closer to the side and the polygon whose sides are contained in these tangents is called as {\em shrink of} $P$ for the picked disk; see Fig. \ref{shrink}. By construction, $P$ and its shrink are parallel and by Proposition \ref{parallel} (b) they have the same internal angles.

Our second intermediate result is as follows. 
\begin{proposition}
\label{maclaurin}
Let $\omega$ be a circle with centre $O$ and radius $r_{\omega}$, $l$ be a line tangent to $\omega$ at $T$ and $X$ be an arbitrary point on $l$. Let $m$ be the other tangent from $X$ to $\omega$, $R=m\cap\omega$ and $X'$ be reflection of $X$ on $m$ w.r.t. $R$. Then $t(\omega, l)$, the locus of $X'$, is a Maclaurin trisectrix. Conversely, if $t(\omega, l)$ is a Maclaurin trisectrix then there exist circle $\omega$ and line $l$ which generates it as described.    
\end{proposition}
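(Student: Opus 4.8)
The plan is to prove the direct statement by an explicit coordinate computation that exhibits the locus of $X'$ as a curve congruent to $r=a\sec(\theta/3)$, and then to obtain the converse by inverting this correspondence. First I would fix coordinates adapted to the construction: place $O$ at the origin and put $T=(0,-r_{\omega})$, so that $l$ is the horizontal line $y=-r_{\omega}$ and the tangent length from any $X\in l$ equals its horizontal displacement. The natural parameter is the central angle $\phi=\measuredangle TOR$; writing $R=r_{\omega}(\sin\phi,-\cos\phi)$, the tangent $m$ at $R$ has direction $(\cos\phi,\sin\phi)$. Intersecting $m$ with $l$ locates $X$, and the point reflection through $R$ (with $R$ the midpoint of $XX'$, since $X\in m$) locates $X'$. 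Substituting $s=\phi/2$ and using standard double-angle identities, I expect the coordinates to collapse to
\[
X'=\bigl(\,r_{\omega}\tan s\,(1+2\cos 2s),\ \ r_{\omega}(1-2\cos 2s)\,\bigr).
\]

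Next I would identify this parametrization. Starting from $r=a\sec(\theta/3)$ and setting $\theta=3s$, the identities $\cos 3s/\cos s=2\cos 2s-1$ and $\sin 3s/\cos s=\tan s\,(1+2\cos 2s)$ yield the standard parametrization $\bigl(a(2\cos 2s-1),\,a\tan s\,(1+2\cos 2s)\bigr)$ of the Maclaurin trisectrix. Comparing, with $a=r_{\omega}$, the computed locus is exactly this curve after the rotation $(x,y)\mapsto(y,-x)$; equivalently $|OX'|=r_{\omega}\sec s$ while the polar angle of $X'$ equals $3s-\tfrac{\pi}{2}$, so in coordinates rotated by $\tfrac{\pi}{2}$ the locus satisfies $r=r_{\omega}\sec(\theta/3)$. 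This identifies the locus as a Maclaurin trisectrix with pole at the centre $O$ and parameter $a=r_{\omega}$. I would record the degenerate cases as a sanity check: $X=T$ forces $m=l$, $R=T$ and hence $X'=T$ (the vertex, at distance $r_{\omega}$ from $O$), while as $R$ approaches the antipode of $T$ the tangent $m$ becomes parallel to $l$, $X$ escapes to infinity, and $X'$ runs off along the asymptote. Thus as $X$ ranges over all of $l$ (equivalently $\phi\in(-\pi,\pi)$), the entire curve is swept.

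For the converse I would argue that the correspondence above is reversible. Any Maclaurin trisectrix is, up to a rigid motion, the curve $r=a\sec(\theta/3)$ for a unique $a>0$. Given such a curve, take $\omega$ to be the circle of radius $a$ centred at the image of the pole and $l$ the tangent line at the point $T$ produced by the same rigid motion; by the direct part this pair $(\omega,l)$ generates precisely the given trisectrix, which proves existence.

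The computation itself is routine; the only genuine care lies in the final identification step, namely verifying that the closed-form parametrization is the \emph{full} Maclaurin trisectrix with the correct pole, constant, and orientation, rather than merely a branch or an affine image of it. Pinning down that the pole sits at $O$ with $a=r_{\omega}$, and confirming that the range $s\in(-\tfrac{\pi}{2},\tfrac{\pi}{2})$ (equivalently $X$ ranging over all of $l$) sweeps the entire curve, node and asymptote included, is where I would concentrate the rigour.
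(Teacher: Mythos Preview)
Your argument is correct but follows a different, more computational route than the paper. You set up Cartesian coordinates, parametrize by the central angle $\phi=\measuredangle TOR$, compute $X'$ explicitly, and then identify the resulting parametrization with the standard one for $r=a\sec(\theta/3)$ via triple-angle identities and a quarter-turn rotation. The paper instead works synthetically in polar coordinates with pole $O$ and axis through $T$: taking $\phi$ to be the azimuth of $X$ (this is exactly your $s=\tfrac12\measuredangle TOR$), the right triangle $OTX$ gives $|OX|=r_{\omega}\sec\phi$; since $OR\perp m$ and $R$ is the midpoint of $XX'$, $OR$ is the perpendicular bisector of $XX'$, so $|OX'|=|OX|$; and the congruences $\triangle OTX\cong\triangle ORX\cong\triangle ORX'$ force $\measuredangle TOX'=3\phi$. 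Thus $X'$ has polar coordinates $(r_{\omega}\sec\phi,\,3\phi)$ and the equation $r=r_{\omega}\sec(\theta/3)$ drops out immediately, with no separate identification step. The paper's approach is shorter and makes the angle-tripling transparent; your coordinate computation is self-contained and avoids any synthetic reasoning, at the price of the bookkeeping you yourself flag (matching the parametrization up to a rigid motion and checking the full curve is swept). Both handle the converse by the same reversibility argument.
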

\begin{proof}
Let us introduce a polar coordinate system with pole at $O$ and axis on $TO$-ray. The angular coordinates are measured in the counterclockwise direction; see Fig. \ref{polar}.
\begin{figure}[h]
\centering
\includegraphics[height=2.2 in, keepaspectratio = true]{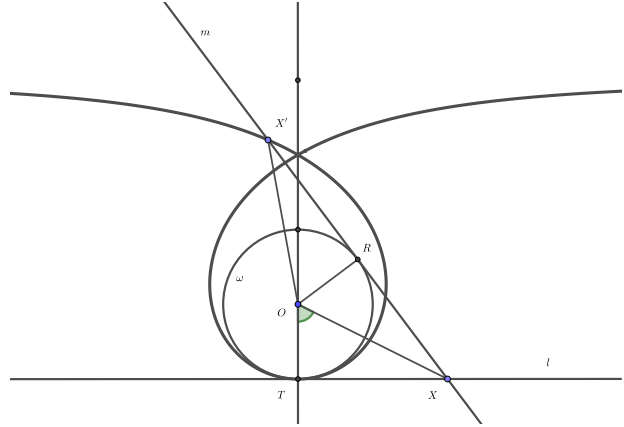}
\caption{A characterisation of Maclaurin trisectrix.}
\label{polar}
\end{figure}
Let the azimuth of $X$ be $\phi\in [0, \frac{\pi}{2})\cup (\frac{3\pi}{2}, 2\pi]$. If $\phi\in [0, \frac{\pi}{2})$, then $|OX|=r_{\omega}\sec \phi$. By construction $\triangle OX'X$ is isosceles with $|OX'|=|OX|$. Since $\triangle OTX=\triangle ORX=\triangle ORX'$ we have $\measuredangle TOX'=3\phi$. Thus, $X'$ has polar coordinates $(r_{\omega}\sec \phi, 3\phi)$ which gives the polar equation $r=r_{\omega}\sec \frac{\theta}{3}$. If $\phi\in (\frac{3\pi}{2}, 2\pi]$ we get the same equation after replacing $\phi$ in our analysis by $\phi'=2\pi-\phi$. Note that $t(\omega, l)$ has an asymptote which is perpendicular to the polar axis and passes over the point $(2\pi, 3r_{\omega})$. 

Conversely, suppose $t(\omega, l)$ is a curve with polar equation $r=a\sec \frac{\theta}{3}$. Then $\omega$ is chosen as the circle with centre at the pole and radius $a$, $l$ is tangent to $\omega$ at point $T(a, 0)$. We can repeat the above argument to show that this configuration generates $t(\omega, l)$.  
\end{proof}
\noindent{\em Remarks}: Another derivation of this curve and some motivations for finding alternative derivations of classical curves are given in \cite{loci}.

Let us now prove our first main result. 
\begin{theorem}
\label{n-thm}
If $P$ is a convex $k$-gon containing $n\in\mathbb{N}$ non-overlapping unit disks then \[\|P\|\geq \sqrt{12}\cdot(n-1)+(2-\sqrt{3})\cdot\left \lceil{\sqrt{12n-3}-3}\right \rceil+k\cdot\tan \frac{\pi}{k}.\]
Equality holds if and only if the disks are located in a Wegner packing, $P$ is equiangular, each side of $P$ is tangent to at least one of the disks and each cap of $P$ w.r.t. the convex hull of the disks is a cap w.r.t. a unit disk.
\end{theorem}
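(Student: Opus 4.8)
The plan is to split $\|P\|=\|H\|+\|P\setminus H\|$, where $H$ is the convex hull of the $n$ disks (so $H\subseteq P$ by convexity), and to estimate the two pieces separately. The claimed right-hand side is the Wegner bound of Theorem \ref{wegner} with the disk-area constant $\pi$ replaced by the unit-$k$-gon area $k\tan\frac{\pi}{k}$; since $k\tan\frac\pi k-\pi$ is exactly the total area of the caps of a unit $k$-gon with respect to its inscribed disk, it is enough to prove
\[\|H\|\ \ge\ \sqrt{12}\,(n-1)+(2-\sqrt3)\bigl\lceil\sqrt{12n-3}-3\bigr\rceil+\pi,\qquad \|P\setminus H\|\ \ge\ k\tan\tfrac\pi k-\pi,\]
and add them. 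The first is the Wegner inequality applied to $H$, so everything reduces to the second.

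For the second estimate I would introduce the tangent polygon $P^\ast$ of $H$ in the edge directions of $P$: for each side of $P$ take the line parallel to it and tangent to $H$ on the side where $H$ lies, and let $P^\ast$ be the intersection of the corresponding half-planes. Then $H\subseteq P^\ast\subseteq P$, every side of $P^\ast$ is tangent to $H$, and $P^\ast$ is a convex polygon with the same edge directions as $P$; by Proposition \ref{parallel}(b) it has the same interior angles $\alpha_1,\dots,\alpha_k$ (a side of $P^\ast$ degenerating only lowers the side count, which will improve the final bound). Because the disks have radius $1$, the boundary $\partial H$ consists of unit-circle arcs and common tangent segments meeting tangentially, so $\partial H$ is $C^1$ with radius of curvature at least $1$ everywhere. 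Since all sides of $P^\ast$ are tangent to $H$, the region $P^\ast\setminus H$ splits into caps $\hat C_i$, one at each vertex, where $\hat C_i$ is bounded by the two sides meeting there (at angle $\alpha_i$) and the arc of $\partial H$ between their tangent points (turning by $\pi-\alpha_i$). Thus $\|P\setminus H\|\ge\|P^\ast\|-\|H\|=\sum_{i=1}^k\|\hat C_i\|$.

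The heart of the argument, and the step I expect to be hardest, is the cap comparison: a cap cut from a convex body of radius of curvature $\ge1$ by two tangent lines meeting at angle $\alpha$ has area at least
\[f(\alpha):=\cot\tfrac\alpha2-\tfrac{\pi-\alpha}2,\]
the area of the corresponding cap of a unit disk, with equality iff the arc is a single unit-circle arc. To prove $\|\hat C_i\|\ge f(\alpha_i)$ I would place the vertex at the origin and express the cap area through the support function $h$ of $H$ based there. On the relevant arc $h$ vanishes at the two endpoints and is negative between, and a short computation gives
\[\|\hat C_i\|=\tfrac12\int_{\theta_1}^{\theta_2}\bigl(-h(\theta)\bigr)\bigl(h(\theta)+h''(\theta)\bigr)\,d\theta,\qquad \theta_2-\theta_1=\pi-\alpha_i<\pi.\]
Here $\rho:=h+h''\ge1$ is the radius of curvature. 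Solving $u''+u=-\rho$ for $u:=-h\ge0$ with vanishing boundary data, the Green's function of $\tfrac{d^2}{d\theta^2}+1$ on an interval shorter than $\pi$ shows that $u$ is a nonnegative, monotone nondecreasing functional of $\rho$; hence the integrand $u\rho$ is nondecreasing in $\rho$, and the area is minimized at $\rho\equiv1$, i.e.\ for the unit disk. Geometrically, a boundary flatter than the unit circle but tangent to the same two lines bulges less toward the vertex and so leaves a larger cap.

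Finally I would combine the estimates. A direct check gives $f''(\alpha)=\tfrac12\csc^2\tfrac\alpha2\cot\tfrac\alpha2>0$ on $(0,\pi)$, so $f$ is strictly convex; with $\sum_i\alpha_i=(k-2)\pi$, Jensen's inequality yields
\[\sum_{i=1}^k\|\hat C_i\|\ \ge\ \sum_{i=1}^k f(\alpha_i)\ \ge\ k\,f\!\Bigl(\tfrac{(k-2)\pi}{k}\Bigr)=k\tan\tfrac\pi k-\pi,\]
and the same passes through with fewer than $k$ caps since $m\tan\frac\pi m-\pi$ decreases in $m$. Together with the Wegner bound for $\|H\|$ this proves the inequality. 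For the equality statement, equality in Wegner forces a Wegner packing; equality in $\|P\|\ge\|P^\ast\|$ forces $P=P^\ast$, i.e.\ every side of $P$ tangent to $H$; equality in the cap comparison forces each arc to be a single unit-circle arc, i.e.\ each cap of $P$ with respect to $H$ to be a cap with respect to a unit disk; and strict convexity makes Jensen tight only when all $\alpha_i$ coincide, i.e.\ $P$ is equiangular. These are exactly the four asserted conditions.
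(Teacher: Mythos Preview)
Your argument is correct and reaches the same inequality, but the route differs from the paper's in the key step, the cap comparison. The paper never invokes curvature or Jensen. Instead, having reduced to $P$ with all sides tangent to $H$, it fixes one disk $\omega$ and forms the \emph{shrink} $P'$ of $P$: translate each side of $P$ to the nearer tangent of $\omega$. By Proposition~\ref{parallel}(b), $P'$ is a convex $k$-gon with the same angles as $P$ and circumscribes $\omega$, so Theorem~\ref{1-thm} gives $\|Cap_\omega(P')\|\ge k\tan\frac{\pi}{k}-\pi$ directly. The comparison $\|Cap_H(P)\|\ge\|Cap_\omega(P')\|$ is then done vertex by vertex by an elementary trick: at a vertex whose two sides touch different disks $\tau,\kappa$, insert an auxiliary unit disk tangent to both sides, and observe that the cap with respect to the enlarged hull equals the corresponding cap of $P'$ while the hull strictly grew (triangle inequality on the centres). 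No analysis is needed beyond this.

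Your approach replaces the shrink-plus-auxiliary-disk device by the support-function inequality ``curvature $\ge1$ $\Rightarrow$ cap $\ge f(\alpha)$'' and replaces the appeal to Theorem~\ref{1-thm} by the explicit convexity of $f$ and Jensen. This is more analytic but also more general: it shows at once that the same bound $\|P\setminus H\|\ge k\tan\frac{\pi}{k}-\pi$ holds whenever $H$ is any convex body with radius of curvature at least $1$ (equivalently, $H=K+D$ for some convex $K$ and the unit disk $D$), not just a hull of unit disks. One technical point worth making explicit in your write-up: because $\partial H$ contains straight segments, $\rho=h+h''$ is a measure, namely $1\,d\theta$ plus Dirac masses at the edge-normal directions of the centre polygon; your Green's-function monotonicity argument still goes through for measures since $G\ge0$ on an interval of length $<\pi$, but it deserves a sentence. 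The equality analysis you give matches the paper's.
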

\begin{proof}
By Theorem \ref{area} we may assume that each side of $P$ is tangent to the convex hull of the disks, which we denote as $H$. This is equivalent to assume that each side is tangent to at least one of the disks. Since $\|P\|=\|H\|+\|P\setminus H\|$, by Theorem \ref{wegner} it suffices to prove that \[\|P\setminus H\|\geq k\cdot\tan \frac{\pi}{k}-\pi \tag{1}\] where the right hand side is the sum of cap areas of a unit $k$-gon w.r.t. the circumscribed unit disk, while the left hand side is $\|Cap_{H}(P)\|$.

Let $\omega$ be one of the disks and $P'$ be the shrink of $P$ for $\omega$; see Fig. \ref{shrink} (a).
\begin{figure}[h]
\centering
\includegraphics[height=2.0 in, keepaspectratio = true]{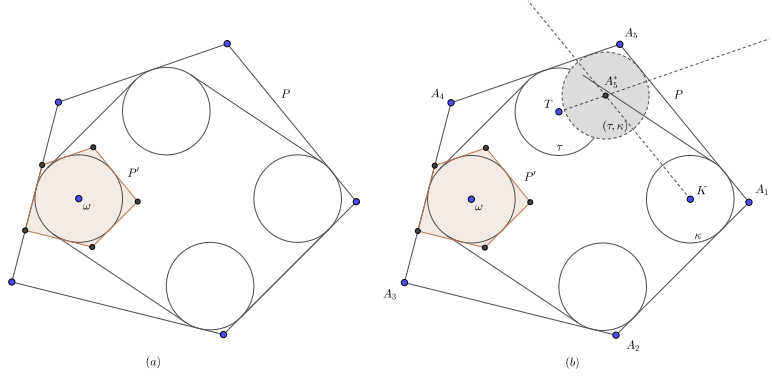}
\caption{Shrinking $P$ to $P'$ in (a); adding an auxiliary disk in (b).}
\label{shrink}
\end{figure}
Since $P'$ is a circumscribing convex $k$-gon of $\omega$, by Theorem \ref{1-thm} \[\|Cap_{\omega}(P')\|\geq k\cdot\tan \frac{\pi}{k}-\pi \tag{2}\] and equality holds iff $P'$ is regular. 

We claim that each cap of $P'$ w.r.t. $\omega$ is smaller (in area) than the corresponding cap of $P$ w.r.t. $H$. To see this, let $A_{1}, ..., A_{k}$ be the vertices of $P$ and $\mathbb{S}$ be the set of $n$ disks that it contains. Choose one of the vertices, $A_{i}$. If $A_{i-1}A_{i}$ and $A_{i}A_{i+1}$ are tangent to the same unit disk in $\mathbb{S}$, then the cap of $P$ w.r.t. $H$ with a vertex at $A_{i}$ is a cap w.r.t. a unit disk. Since shrinking preserves the internal angle at $A_{i}$, the corresponding cap of $P'$ is a translation of this cap; see Fig. \ref{shrink} (b). Thus, they are congruent.

If otherwise, $A_{i-1}A_{i}$ and $A_{i}A_{i+1}$ are tangent to two different unit disks in $\mathbb{S}$. Let they be $\tau$ with centre $T$ and $\kappa$ with centre $K$, respectively. Draw a line passing through $T$ which is parallel to $A_{i-1}A_{i}$ and another line passing through $K$ which is parallel to $A_{i}A_{i+1}$ as in Fig. \ref{shrink} (b). Let $A^{\ast}_{i}$ be their intersection. Note that this point is well defined, on the bisector of $\angle A_{i}$ and because of convexity it is inside $P$ and closer to $A_{i}$ than both $T$ and $K$. Let $(\tau,\kappa)$ be the unit disk centred at $A^{\ast}_{i}$. By construction $(\tau,\kappa)$ is tangent to  $A_{i-1}A_{i}$ and $A_{i}A_{i+1}$. If $H'$ is the convex hull of $\mathbb{S}\cup \{(\tau,\kappa)\}$, then the cap of $P$ w.r.t. $H'$ with a vertex at $A_{i}$ is the same as the corresponding cap of $P'$ w.r.t. $\omega$ by the above argument. But the former cap is smaller than the cap of $P$ w.r.t. $H$ with a vertex at $A_{i}$ by the amount \[\|H'\|-\|H\|=\|\triangle A^{\ast}_{i}TK\|+|TA^{\ast}_{i}|+|A^{\ast}_{i}K|-|TK|>0\] (recall the triangle inequality). This proves our claim which implies that \[\|Cap_{H}(P)\|\geq \|Cap_{\omega}(P')\|\tag{3}\]
and equality holds iff each cap of $P$ w.r.t. $H$ is a cap w.r.t. a unit disk. Inequalities $(2)$ and $(3)$ imply $(1)$. 

From the proof above it should be clear that equality holds iff 
\begin{itemize}
\item Each side of $P$ is tangent to at least one of the disks, and
\item $\|H\|=\sqrt{12}\cdot(n-1)+(2-\sqrt{3})\cdot \left \lceil{\sqrt{12n-3}-3}\right \rceil+\pi$, which happens iff the disks constitute a Wegner packing by Theorem \ref{wegner}, and
\item Each cap of $P$ w.r.t. the convex hull of the disks is a cap w.r.t. a unit disk, and
\item $P'$ is regular, which happens when $P$ is equiangular (recall that $P'$ and $P$ have the same internal angles).
\end{itemize} 
\end{proof}
\noindent{\em Remarks}: In above we used Theorem \ref{wegner} to prove Theorem \ref{n-thm}. One should notice that the reverse implication is also possible and very much the same. 

The following result shows that the above inequality is tight in many cases. 
\begin{theorem}
\label{6k-thm}
If
\begin{itemize}
\item[(a)] $n=2$ and $k=2k'$ with $k'\geq 2$, or
\item[(b)] $n\in \{3, 6\}$ and $k=3k'$, or
\item[(c)] $n\in\mathbb{N}\setminus\{2\}$ is not exceptional and $k=6k'$
\end{itemize}
then the inequality in Theorem \ref{n-thm} is tight and the solution of Problem \ref{main-pr} can be constructed. In particular, when $n=3m(m-1)+1$ and $k=6$ the solution is the regular hexagon with sides of length $2(m-1)+\frac{2}{\sqrt{3}}$. 
\end{theorem}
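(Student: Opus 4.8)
The plan is to prove tightness by exhibiting, in each of the three cases, an explicit configuration that meets all four equality conditions of Theorem \ref{n-thm}: the $n$ disks form a Wegner packing, the circumscribing $k$-gon $P$ is equiangular, every side of $P$ is tangent to at least one disk, and every cap of $P$ with respect to the convex hull $H$ is a cap with respect to a single unit disk. Once such a $P$ is built, $\|P\|$ equals the stated lower bound, so $P$ is optimal and the bound is tight. First I would settle the existence of the required Wegner packings: for case (c) this is immediate since $n$ is assumed non-exceptional, while for case (b) one checks the perimeter condition $2\lceil\sqrt{12n-3}-3\rceil$ against the equilateral triangular arrangements $T_m = m(m+1)/2$; this is met precisely by $T_2=3$ and $T_3=6$ (for larger triangular numbers the triangular arrangement is Groemer but fails the Wegner perimeter), which is exactly why case (b) is restricted to $n\in\{3,6\}$. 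Case (a) uses the trivial packing of two tangent disks.

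The core construction is uniform. Writing $K$ for the convex hull of the disk centres, $H = K \oplus D$ is the Minkowski sum of $K$ with a unit disk $D$, so $\partial H$ consists of straight segments (common tangents to the collinear disks along each edge of $K$) joined by circular arcs at the corners, each arc lying on the corner disk and subtending the exterior angle of $K$ there. Because $K$ is triangulated into unit-edge equilateral triangles, every exterior angle of $K$ is a multiple of $\frac{\pi}{3}$: it is $\frac{\pi}{3}$ or $\frac{2\pi}{3}$ in the polygonal cases, and each ``end'' of the stadium $H$ for $n=2$ contributes $\pi$. I would then build $P$ as an equiangular $k$-gon whose every exterior angle equals $\frac{2\pi}{k}$, by polygonalising each rounded corner with the fan of consecutive tangent lines of a regular $k$-gon circumscribing the corner disk, turning by $\frac{2\pi}{k}$ at each vertex, and joining the fans by the flat common tangents. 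The number of vertices placed on a corner of exterior angle $e$ is $e/(2\pi/k)$, which is an integer exactly when $\frac{2\pi}{k}$ divides $e$; this is why $k=2k'$ suits the $\pi$-ends of case (a), $k=3k'$ suits the $\frac{2\pi}{3}$-corners of case (b), and $k=6k'$ suits the mixed $\{\frac{\pi}{3},\frac{2\pi}{3}\}$-corners of case (c). Since the exterior angles sum to $2\pi$, the total vertex count is automatically $\frac{2\pi}{(2\pi/k)} = k$, so $P$ is genuinely a $k$-gon.

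It then remains to verify that this $P$ satisfies the four equality conditions. Equiangularity holds by construction; each flat side is tangent to the edge disks and each fan side is tangent to its corner disk, so every side is tangent to a disk; and every vertex lies on a fan and caps its single corner disk, so each cap is a cap with respect to a unit disk. I expect the main obstacle to be the careful gluing at the transition between a corner fan and an adjoining flat side: one must check that the flat common tangent to the edge disks is itself tangent to the corner disk and is the extreme tangent line of that corner's fan, so that the angles there are also $\pi - \frac{2\pi}{k}$ and $\partial H$ between any two consecutive sides is a single disk arc. This is precisely the point where the divisibility of the corner exterior angles by $\frac{2\pi}{k}$ is used, and where one verifies convexity of $P$ and that $P \supseteq H$; equivalently, one can bypass the equality characterisation and compute directly that the corner gaps sum to the full cap area $k\tan\frac{\pi}{k} - \pi$ of a unit $k$-gon, giving equality in inequality $(1)$ of Theorem \ref{n-thm}.

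Finally, for the explicit statement, when $n = 3m(m-1)+1$ is a centred hexagonal number and $k=6$, $K$ is a regular hexagon with $m$ centres per side and side length $2(m-1)$, whose apothem is $\sqrt{3}(m-1)$. The optimal $P$ is the regular hexagon whose apothem exceeds this by the disk radius, namely $\sqrt{3}(m-1)+1$; converting apothem to side length for a regular hexagon gives side $\frac{2}{\sqrt3}\big(\sqrt3(m-1)+1\big) = 2(m-1) + \frac{2}{\sqrt3}$, as claimed.
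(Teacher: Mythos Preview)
Your approach is correct and is essentially the same construction as the paper's, just organised more uniformly: where the paper treats the three cases separately (pasting halves of a unit $2k'$-gon onto a rectangle for (a), replacing the caps of the tangent triangle by thirds of a unit $3k'$-gon for (b), and first cutting the tangent polygon down to a hexagon and then replacing its caps by sixths of a unit $6k'$-gon for (c)), you describe all three at once via the Minkowski-sum picture $H=K\oplus D$ and a single ``tangent fan'' polygonalisation of each rounded corner. The divisibility condition $e/(2\pi/k)\in\mathbb{Z}$ is exactly what makes the paper's cut-and-paste well defined, and your exterior-angle bookkeeping $\sum e=2\pi$ giving $k$ vertices is the same count the paper leaves implicit.

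One small point you should make explicit in case (c): you need $K$ to be a genuine polygon (not a segment) so that its exterior angles lie in $\{\tfrac{\pi}{3},\tfrac{2\pi}{3}\}$ rather than being $\pi$ at the two ends. The paper handles this by observing that for $n\neq 2$ the linear arrangement never satisfies the Wegner perimeter condition $2\lceil\sqrt{12n-3}-3\rceil$; without that remark your fan count at a degenerate end would require $k$ divisible by $2$ rather than $6$, which is not assumed. Your computation for the centred hexagonal case is correct (and in fact the paper's own proof contains a typo here, writing side $2m$ for $K$ where it should be $2(m-1)$).
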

\begin{proof}
Let us prove (a). Two unit disks are Wegner packed iff they are tangent. Let $O_{1}, O_{2}$ be the centres of the disks and let us construct the solution of Problem \ref{main-pr} as follows. First, draw the tangent polygon of the disks; recall that in this case it is an infinite strip. Cut out a rectangle whose two opposite sides are contained in the joint tangents of the disks, and the other two sides pass through $O_{1}$ and $O_{2}$. For each of these latter two sides, take half of unit $2k'$-gon and paste (i.e. glue) it over its side of length $2$ with the rectangle. The resulting $2k'$-gon contains the two disks and satisfies all the equality requirements in Theorem \ref{n-thm}; see Fig. \ref{2-opt}.
\begin{figure}[h]
\centering
\includegraphics[height=1.3 in, keepaspectratio = true]{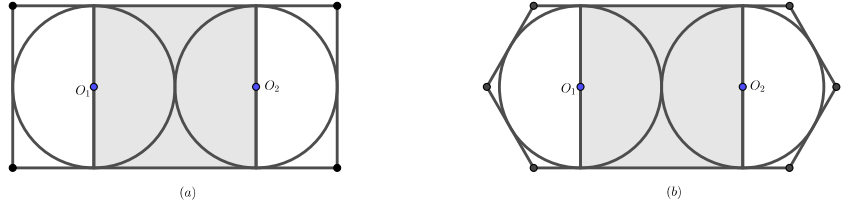}
\caption{Construction of optimal polygons for $n=2$ and $k=4$ in (a), $k=6$ in (b).}
\label{2-opt}
\end{figure}

Let us prove (b). Let $n=3$ and it is easy to see that centres of three Wegner packed disks constitute vertices of an equilateral triangle with sides of length $2$ as in Fig. \ref{tri-fig} $(a)$. Then, their tangent polygon $T_{1}$ satisfies all the equality conditions in Theorem \ref{n-thm}. Thus, $T_{1}$ is the solution when $k'=1$. For $k'=2$, we can cut from $T_{1}$ three equilateral triangles, each has a vertex common with $T_{1}$ and a side tangent to convex hull of the circles. The remaining hexagon which we denote by $T_{2}$ satisfies the necessary conditions; see Fig. \ref{tri-fig} $(b)$. In general, $T_{k'}$ is constructed as follows. Remove all caps of $T_{1}$ w.r.t. the convex hull of the disks and replace each by the union of caps of one-third of unit $3k'$-gon w.r.t. the circumscribed unit disk. It is easy to check that this construction is well defined and satisfies the necessary conditions. 

If $n=6$, a similar argument proves the statement after noting that centres of six Wegner packed disks constitute vertices of an equilateral triangle with sides of length $4$ as in Fig. \ref{tri-fig} (c).
\begin{figure}[h]
\centering
\includegraphics[height=1.6 in, keepaspectratio = true]{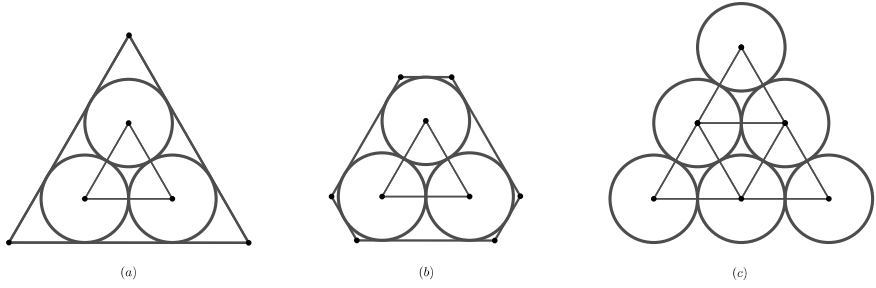}
\caption{Wegner packing and optimal polygons when $n=3, 6$.}
\label{tri-fig}
\end{figure}
 
Let us prove (c). Since $n$ is not exceptional, there is a Wegner packing of $n$ disks. Because of the perimeter condition, linear packing of $n$ disks is never a Wegner packing for $n\neq 2$. Thus, the convex hull of the centres of Wegner packed $n\neq 2$ disks which we denote as $T_{c}$ has at least three sides. Let $T$ be the tangential polygon of the packed disks. Since $T_{c}$ has at least three and at most six sides and is triangulated into equilateral triangles, and $T_{c}$ and $T$ are parallel, $T$ has three to six sides and its internal angles are either $\frac{\pi}{3}$ or $\frac{2\pi}{3}$ (recall Proposition \ref{parallel} (b)). This gives us the following possibilities:
\begin{itemize}
\item $T$ is an equilateral triangle, or
\item $T$ is a quadrilateral with two internal angles of $\frac{2\pi}{3}$ and two of $\frac{\pi}{3}$, or
\item $T$ is a pentagon with four internal angles of $\frac{2\pi}{3}$ and one of $\frac{\pi}{3}$, or
\item $T$ is a hexagon with internal angles of $\frac{2\pi}{3}$.
\end{itemize}
In each case, take a vertex with angle $\frac{\pi}{3}$ and cut out an equilateral triangle from $T$ which shares this vertex and the side of it which does not contain this vertex is tangent to the disk closest to the vertex. Let the resulting polygon be $T_{1}$. Then, by construction, $T_{1}$ is a convex hexagon containing all the disks, each of its internal angle is $\frac{2\pi}{3}$, and each side is tangent to at least one of the disks and each cap is a cap w.r.t. a unit disk. Thus, $T_{1}$ satisfies all equality conditions in Theorem \ref{n-thm} and is a solution to Problem \ref{main-pr} when $n$ is not exceptional and $k=6$.

In general, $T_{k'}$ is constructed as follows. Remove all six caps of $T_{1}$ w.r.t. the convex hull of the disks and replace each by the union of caps of a one sixth of the unit $6k'$-gon w.r.t. the unit disk which it circumscribes. It is easy to check that this construction is well defined and satisfies the equality conditions. 

If $n=3m(m-1)+1$, i.e. the centred hexagonal number, it is easy to verify that the Wegner packing of $n$ disks is so that the convex hull of their centres is a regular hexagon with sides of length $2m$. Then, their tangent polygon is the regular hexagon of sides of length $2(m-1)+\frac{2}{\sqrt{3}}$ and satisfies all the equality conditions in Theorem \ref{n-thm}. Thus, the tangent hexagon is the solution.
\end{proof}
\noindent{\em Remarks}: With some effort one can show that a construction similar to that in Theorem \ref{6k-thm} (a) works for $n=2$ and $k=2k'+1$. This time, we need to paste half of unit $2k'$-gon and half of unit $2(k'+1)$-gon to the central rectangle. Moreover, this reasoning can be applied to finding the smallest convex $k$-gon contianing $n$ linearly packed disks.

So far our solutions for Problem \ref{main-pr} relied on the cases where the disks are Wegner packed, i.e. efficiently packed. The following result shows that this is not always the case.
\begin{theorem}
\label{non-wegner}
Let $MNKL$ be the smallest area convex quadrilateral containing three efficiently packed unit disks and $P^{\ast}$ be the $2\times 6$ rectangle in which the disks are packed linearly. Then $\|MNKL\|>\|P^{\ast}\|$.
\end{theorem}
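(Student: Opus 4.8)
The plan is to compute $\|MNKL\|$ in closed form, observe that $\|P^{\ast}\|=12$, and reduce the desired strict inequality to the elementary fact $11\sqrt{3}>18$.

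First I would fix the efficient packing. By the analysis in Theorem \ref{6k-thm}, three non-overlapping unit disks are efficiently packed iff their centres form an equilateral triangle of side $2$; I place them at $O_{1}=(-1,0)$, $O_{2}=(1,0)$, $O_{3}=(0,\sqrt{3})$ and let $H$ be their convex hull. Then $H$ is a rounded triangle bounded by three external tangent segments (each parallel to a side of the triangle, at distance $1$) and three unit-circle arcs, and it inherits the full $3$-fold symmetry of the centres, with three axes of symmetry, each through one centre and the midpoint of the opposite tangent segment. Its support function in the direction $(\cos\theta,\sin\theta)$ is $h(\theta)=\max\{-\cos\theta,\cos\theta,\sqrt{3}\sin\theta\}+1$, which for $\theta\in(0,\tfrac{\pi}{2})$ equals $\max\{\cos\theta,\sqrt{3}\sin\theta\}+1$; in particular the lowest and highest supporting lines are $y=-1$ and $y=\sqrt{3}+1$.

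Next I would determine the smallest convex quadrilateral containing $H$. By Theorem \ref{area} the minimiser is tangent to $H$ with the midpoint of each side on $\partial H$; combined with the $3$-fold symmetry this reduces the search to quadrilaterals symmetric about one axis, and among these to the isosceles trapezoid whose horizontal sides are the forced supporting lines $y=-1$ and $y=\sqrt{3}+1$ and whose two slanted sides have outward normals $(\pm\cos\theta,\sin\theta)$, each tangent to $H$. Reading off the four vertices from $h(\theta)$ and simplifying, the area is $(\sqrt{3}+2)\cdot\frac{\sqrt{3}\sin\theta+2}{\cos\theta}$ for $\theta\in[\tfrac{\pi}{6},\tfrac{\pi}{2})$ (slants tangent to the disk at $O_{3}$) and $(\sqrt{3}+2)\left(2+\frac{2-\sqrt{3}\sin\theta}{\cos\theta}\right)$ for $\theta\in(0,\tfrac{\pi}{6}]$ (slants tangent to the disks at $O_{1},O_{2}$). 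A one-line derivative computation shows the first expression is increasing (its derivative has the sign of $\sqrt{3}+2\sin\theta>0$) and the second is decreasing toward $\theta=\tfrac{\pi}{6}$ (derivative has the sign of $2\sin\theta-\sqrt{3}<0$), so the minimum occurs at $\theta=\tfrac{\pi}{6}$, exactly the value at which each slanted side is a common external tangent of two disks and its midpoint falls on the flat part of $\partial H$. Evaluating gives
\[ \|MNKL\|=(\sqrt{3}+2)\left(1+\frac{4}{\sqrt{3}}\right)=6+\frac{11\sqrt{3}}{3}. \]

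Finally I would close the argument: the three linearly packed disks fill the $2\times 6$ rectangle, so $\|P^{\ast}\|=12$, and hence
\[ \|MNKL\|=6+\frac{11\sqrt{3}}{3}>12=\|P^{\ast}\|\iff 11\sqrt{3}>18\iff 363>324, \]
which holds. The hard part is not this numerical comparison but the geometric reduction of the previous paragraph: proving that the global minimiser over \emph{all} convex quadrilaterals containing $H$ is this symmetric trapezoid. I would justify it via the necessary midpoint condition of Theorem \ref{area}, together with Proposition \ref{parallel} (so that passing to a shrink preserves the internal angles) and Proposition \ref{maclaurin}: when two consecutive sides of a circumscribing quadrilateral are tangent to the same disk and touch it at their midpoints, each vertex is the reflection of its neighbour across a tangent point, and as the tangent line rotates the neighbour sweeps a Maclaurin trisectrix; this lets me locate the vertices and rule out the asymmetric and two-opposite-vertex (``kite'') competitors. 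Establishing this reduction rigorously, rather than the final estimate, is the crux of the proof.
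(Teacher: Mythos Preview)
Your computation for the trapezoid candidate is correct and matches the paper exactly: the quadrilateral you find at $\theta=\pi/6$ is precisely the paper's $A'BCA''$ (up to relabelling), with area $\frac{11+6\sqrt{3}}{\sqrt{3}}=6+\frac{11\sqrt{3}}{3}>12$. The tools you name in your final paragraph---the midpoint condition of Theorem~\ref{area}, the angle-preserving shrink from Proposition~\ref{parallel}, and the Maclaurin trisectrix of Proposition~\ref{maclaurin}---are also exactly the tools the paper uses to enumerate candidates. So the outline is sound.

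There is, however, a genuine gap. The step ``combined with the $3$-fold symmetry this reduces the search to quadrilaterals symmetric about one axis'' is not a valid inference: a quadrilateral cannot carry $3$-fold symmetry, so the minimiser is certainly not unique, and nothing forces an individual minimiser to be axially symmetric a priori. The paper does not argue this way; it carries out a direct case analysis (cases (a)--(g)) using the midpoint condition and the trisectrices, and finds that the candidates fall into \emph{two} symmetry classes: the trapezoid type $A'BCA''$ and a kite type $I_{3}R_{1}BR_{4}$, where two opposite vertices lie on an axis of $H$. You mention ``ruling out the kite competitors'' but never do so, and your one-parameter optimisation over $\theta$ simply does not see this family. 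The paper, incidentally, does \emph{not} rule the kites out or decide which family wins; it only needs $\|MNKL\|>12$, and it dispatches the kite case by a neat trick you are missing: the radical axes split $I_{3}R_{1}BR_{4}$ into three convex quadrilaterals, each circumscribing one of the unit disks and none of them a square, so each has area strictly greater than $4$ by Theorem~\ref{1-thm}, giving a total strictly greater than $12$.

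In short: keep your trapezoid calculation, drop the unjustified symmetry shortcut, replace it by the paper's case enumeration (or an equivalent argument) to pin down the two candidate families, and then treat the kite family separately---either by the three-piece decomposition above or by an explicit area bound. You do not need to determine which family actually realises $\|MNKL\|$; showing both exceed $12$ suffices.
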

\begin{proof}
Let $\omega_{i}$, $i=1,2,3$ be the disks and $O_{i}$ be their centres. We know that $|O_{1}O_{2}|=|O_{2}O_{3}|=|O_{3}O_{1}|=2$. Let $\triangle ABC$ be the tangent polygon of the disks such that $A,B$ are on the joint tangent of $\omega_{1}, \omega_{2}$, $B, C$ are on the joint tangent of $\omega_{2}, \omega_{3}$ and $C, A$ are on the joint tangent of $\omega_{3}, \omega_{1}$. Let $\omega_{1}\cap AB=T_{1}$, $\omega_{2}\cap AB=T_{2}$, $\omega_{2}\cap BC=T_{3}$, $\omega_{3}\cap BC=T_{4}$, $\omega_{3}\cap AC=T_{5}$ and $\omega_{1}\cap AC=T_{6}$. Further let $\triangle AA'A''$ be such that $A'\in AB$, $A''\in AC$ and $\omega_{1}$ is an excircle tangent to $A'A''$ on its midpoint. Let $\triangle BB'B''$ and $\triangle CC'C''$ be defined analogously; see Fig. \ref{tangent-tri}. 

Let us draw the section of Maclaurin trisectrix for $\omega_{1}$ and $AB$-line ranging between $A''$ and the reflection $A$ w.r.t. $T_{6}$. We call this curve as the trisectrix for $(\omega_{1}, AA')$. Draw similarly trisectrices for $(\omega_{1}, AA'')$, $(\omega_{2}, BB')$, $(\omega_{2}, BB'')$, $(\omega_{3}, CC')$ and $(\omega_{3}, CC'')$. Let $I_{1}, I_{2}, I_{3}$ denote pairwise intersections of these six curves. Let $R_{1}\in(AA')$ be such that $R_{1}I_{3}$ is tangent to $\omega_{1}$ by its midpoint. We define other points $R_{i}$, $2\leq i\leq 6$ analogously. Let $r_{ij}$ with $i,j\in\{1,2,3\}$ and $i<j$ denote the radical axis of $\omega_{i}$ and $\omega_{j}$ and $O$ be their common intersection. Because of symmetry in our configuration, $I_{1}\in r_{12}$, $I_{2}\in r_{23}$, $I_{3}\in r_{13}$. Let $r_{12}\cap AB=P_{1}$ and $r_{23}\cap BC=P_{2}$.
\begin{figure}[h]
\centering
\includegraphics[height=3 in, keepaspectratio = true]{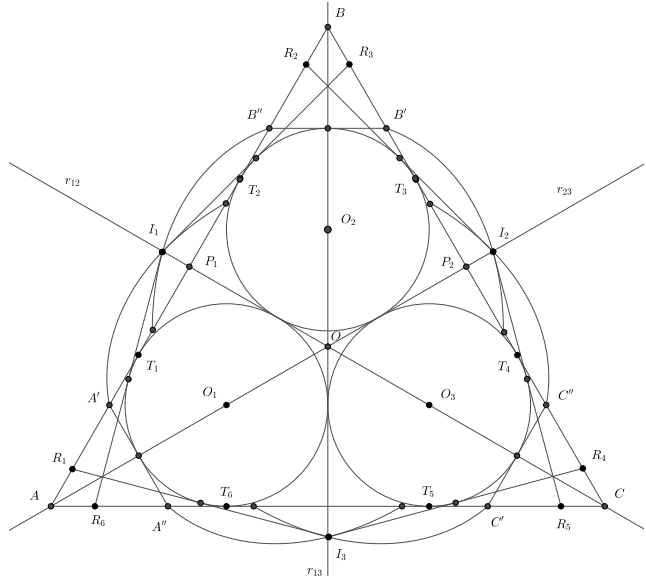}
\caption{Tangent polygon of the three disks.}
\label{tangent-tri}
\end{figure}

\noindent{\em\bf Claim:} $MNKL\in \{A'BCA'', B'CAB'', C'ABC'', I_{1}R_{3}CR_{6}, I_{2}R_{5}AR_{2}, I_{3}R_{1}BR_{4}\}$.

We assume that $MNKL$ is clockwise oriented. Let $H$ be the convex hull of $\omega_{1}, \omega_{2}, \omega_{3}$. Then \[\|H\|=\pi+6+\sqrt{3}. \tag{4}\] By Theorem \ref{area} we know that each side of $MNKL$ is tangent to $H$ on its midpoint. This implies that each side is tangent to at least one of the disks. Since there are four sides and three disks, one disk must be tangent to at least two consecutive sides. Let these be $MN$ and $ML$. This implies $M\in\triangle ABC$. Let us first assume that $M$ is located in the $A$-vertex cap of $\triangle ABC$, i.e. $MN$, $ML$ are tangent to $\omega_{1}$. We shall analyse the following cases. 
\begin{itemize}
\item[(a)] Assume $M=A'$. Then (by Theorem \ref{area}) $L=A''$, $N$ is on $A'B$-ray while $K$ is on $A''C$-ray. Since $NK$ must be tangent to $H$, this implies $N=B$ and $K=C$. So we end up with $A'BCA''$. The case of $M=A''$ is treated analogously.
\item[(b)] Assume $M$ is located in the $A'$-vertex cap of $A'BCA''$ w.r.t. $\omega_{1}$. Then $L$ is located either in the same cap or in $\triangle AA'A''$ while $N$ is either in the same cap or in the region bounded by trisectrix for $(\omega_{1}, AA'')$ and $A'T_{2}$, or on $T_{2}B$-ray. Location of $L$ implies that $K$ must be either in the $A$-vertex cap or in the region bounded by trisectrix for $(\omega_{1}, AA')$ and $AT_{5}$. In all cases, $NK$ necessarily cuts at least one of the disks. Thus, $M$ can not be in the $A'$-vertex cap of $A'BCA''$. Similarly, it can not be in the $A''$-vertex cap of $A'BCA''$. 
\item[(c)] Assume $M\in int(\triangle AA'A'')$. Then $N$ is located in one of $A'$-vertex cap or the interior of the region bounded by trisectrix for $(\omega_{1}, AA'')$ and $A'T_{2}$. The former is not possible by (b). $L$ is located in one of $A''$-vertex cap or the interior of the region bounded by the trisectrix for $(\omega_{1}, AA')$ and $A''T_{5}$. Again the former is not possible. Position of $N$ implies $K$ is either in $B''$-vertex cap, or above $B''B'$-line. The former case is not possible by (b). Similarly, position of $L$ implies $K$ is located below $C'C''$-line. Let $Q$ be the intersection of $B'B''$-line and $C'C''$-line. Our last two conclusions imply that $K$-vertex cap of $MNKL$ contains $\triangle B'QC''$, whose area is \[\|\triangle B'QC''\|=\frac{4+\sqrt{12}}{\sqrt{3}}. \tag{5}\] 

Notice that $\|MNKL\|>\|H\|+\|\triangle B'QC''\|$. On the other hand \[\|A'BCA''\|=\frac{11+6\sqrt{3}}{\sqrt{3}}. \tag{6}\]
By using $(4)$, $(5)$, $(6)$, we know $\|MNKL\|-\|A'BCA''\|>\|H\|+\|\triangle B'QC''\|-\|A'BCA''\|>0$. Thus, MNKL is not the smallest.
\item[(d)] Assume $M\in (R_{1}A')$. Then $L$ is on the section strictly between $A''$ and $I_{3}$ of the trisectrix for $(\omega_{1}, AA')$, while $N$ is on the $A'B$-ray. The former conclusion implies $K$ is located above $BC$-line and below $I_{3}R_{4}$-line. Then $NK$ can only be tangent to $\omega_{2}$. This in turn implies $N\in (T_{2}B)$. Then distance from $N$ to a point on $T_{2}T_{3}$-arc of $\omega_{2}$ is much smaller than distance from $K$ to the same point. In particular the former distance is at most $|BT_{3}|=\sqrt{3}$, while the latter distance is greater than $|T_{3}T_{4}|=4$ (to see this just project $K$ to $BC$). Thus, such $MNKL$ can not be the smallest. Similarly, $M\in (R_{6}A'')$ gives a non-optimal solution.
\item[(e)] Assume $M\in (AR_{1})$. Then $L$ is in the interior the region bounded by the trisectrix for $(\omega_{3}, CC'')$. This implies $K\in int(\triangle CC'C'')$. We can repeat the argument in (c) to reach a contradiction. Similarly, $M\in (AR_{6})$, then we reach to a contradiction.
\item[(f)] Assume $M=R_{1}$. Then $L=I_{3}$, which implies $K=R_{4}$. These imply that $MNKL=R_{1}BR_{4}I_{3}$. Similarly, $M=R_{6}$ implies $MNKL=R_{6}I_{1}R_{3}C$.
\item[(g)] Assume $M=A$. Then $N$ is on $T_{2}B$-ray and $L$ is on $T_{5}C$-ray. Since both $NK$ and $KL$ are tangent to $H$, we must have either $L\in T_{5}C$ or $N\in T_{2}B$. First assume $L\in T_{5}C$. If $L\in (T_{5}C)$, then by arguments above we know that either $L=C'$ or $L=R_{5}$. In the first case we end up with $ABC''C'$ and in the second case with $AR_{2}I_{2}R_{5}$. If $L=C$, then $MNKL=AB''B'C$. Similarly $N\in(T_{2}B)$ implies $MNKL\in\{B'CAB'', I_{2}R_{5}AR_{2}, ABC''C'\}$.
\end{itemize}
Thus, we conclude that when $M$ is located in the $A$-vertex cap of $\triangle ABC$, $MNKL\in \{A'BCA'', B'CAB'', C'ABC'', I_{1}R_{3}CR_{6}, I_{2}R_{5}AR_{2}, I_{3}R_{1}BR_{4}\}$. A similar argument shows that $MNKL$ is one of these six polygons when $M$ is located in the $C$-vertex cap or $B$-vertex cap of $\triangle ABC$. This proves our claim.
	
Symmetries in our configuration imply $\|A'BCA''\|=\|B'CAB''\|=\|C'ABC''\|$ and $\|I_{1}R_{3}CR_{6}\|=\|I_{2}R_{5}AR_{2}\|=\|I_{3}R_{1}BR_{4}\|$. Then $\|A'BCA''\|=\frac{11+6\sqrt{3}}{\sqrt{3}}>12=\|P^{\ast}\|$. Notice that $\|I_{3}R_{1}BR_{4}\|=\|R_{1}P_{1}OI_{3}\|+\|P_{1}BP_{2}O\|+\|P_{2}CI_{3}O\|$. Since each of these three quadrilaterals contains a unit disk and is non-regular, each has area greater than $4$ by Theorem \ref{1-thm}. Then, $\|I_{3}R_{1}BR_{4}\|>12=\|P^{\ast}\|$.
\end{proof}
\noindent{\em Remarks}: By now we know that $n=3, k=4$ is the first case where the disks are packed non-efficiently inside the smallest containing polygon, when we order $(n,k)$ lexicographically.

\section{Final discussions}
\label{final}
Let us discuss some open problems. Because of Theorem \ref{non-wegner}, packing of $n$ disks in the smallest convex $k$-gon is not always the most efficient. However by Dowker inequality \cite{dowker}, at any fixed location of the disks, the area of the smallest containing $k$-gon tends rather fast to the area of their convex hull as $k\rightarrow\infty$. This is a supportive fact for the efficient packing to realise inside the smallest containing $k$-gon when $k$ is large. It is believed that efficient packing of $n\in\mathbb{N}$ unit disks is a Groemer packing \cite{boroczky-ruzsa}. We can then ask whether the packing of $n$ disks in the smallest convex $k$-gon is always a Groemer packing? 

The following assertion seems very plausible.
\begin{conjecture}
If $n=\frac{m(m+1)}{2}$, i.e. the $m'$th triangular number, then the smallest triangle containing $n$ unit disks is the equilateral triangle of side $2(m-1)+2\sqrt{3}$. 
\end{conjecture}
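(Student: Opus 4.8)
The plan is to treat this as the restriction of the Erd\H{o}s--Oler conjecture to triangular numbers, and to prove it with the perimeter-sensitive Oler inequality rather than with Theorem \ref{n-thm}. The reason Theorem \ref{n-thm} cannot be used directly is instructive: for $m\ge 4$ the triangular arrangement is \emph{not} a Wegner packing. Indeed its centres form an equilateral triangle of side $2(m-1)$, so the convex hull $K$ of the centres has perimeter $6(m-1)$, whereas the Wegner condition would force perimeter $2\lceil\sqrt{12n-3}-3\rceil$; these disagree already at $n=10$. Hence $\|H\|$ for the triangular hull strictly exceeds the Wegner minimum, and the bound of Theorem \ref{n-thm} falls strictly below the conjectured optimum. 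The optimality of the triangular arrangement is a genuine trade-off phenomenon: a more hexagonal (Wegner) hull has smaller area but is circumscribed by a triangle only at the cost of large corner caps, and for triangular $n$ the triangular hull wins. Any successful proof must capture this trade-off, which bounding $\|H\|$ and the caps separately (as in Theorem \ref{n-thm}) cannot.

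First I would normalise as in Theorem \ref{n-thm}: by Theorem \ref{area} the minimal triangle $T$ has each side tangent to the convex hull $H$ of the disks, so writing $K$ for the convex hull of the centres, $\|T\|=\|K\|+p(K)+\pi+\|Cap_{H}(T)\|$, where $p(K)$ is the perimeter of $K$. The engine is the Oler inequality (the perimeter-sensitive packing inequality referenced in Section \ref{intro}), which in the present normalisation reads $\|K\|\ge\sqrt{12}\,(n-1)-\tfrac{\sqrt3}{2}\,p(K)$ and is tight exactly for packings whose centres triangulate into equilateral triangles of side two. Substituting gives \[\|T\|\ge\sqrt{12}\,(n-1)+\Bigl(1-\tfrac{\sqrt3}{2}\Bigr)p(K)+\pi+\|Cap_{H}(T)\|.\] One checks that at the triangular arrangement $p(K)=6(m-1)$ and $\|Cap_{H}(T)\|=3\sqrt3-\pi$, and that the right-hand side then collapses to exactly $\sqrt3(m-1)^{2}+6(m-1)+3\sqrt3$, the area of the equilateral triangle of side $2(m-1)+2\sqrt3$.

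It remains to show that the right-hand side is minimised at the triangular arrangement, i.e. that for every circumscribing triangle $T$ with tangent sides \[\Bigl(1-\tfrac{\sqrt3}{2}\Bigr)p(K)+\|Cap_{H}(T)\|\ \ge\ 6\Bigl(1-\tfrac{\sqrt3}{2}\Bigr)(m-1)+3\sqrt3-\pi.\] Here the two competing quantities are coupled through the geometry of the three corners: shrinking $p(K)$ tends to drive a disk into a sharp corner and thereby enlarge the corresponding cap, while each corner cap is bounded below by a single-disk cap whose size is governed by the corner angle (Proposition \ref{parallel} lets one transfer these angles to the shrink, as in the proof of Theorem \ref{n-thm}). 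I would first reduce to the equilateral case, showing that among minimal triangles one may take all three angles equal to $\pi/3$, and then establish the displayed inequality by a corner-by-corner estimate, using the trisectrix description of Proposition \ref{maclaurin} to control how far a vertex of $T$ may protrude beyond the extreme centres of each boundary row.

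The main obstacle is exactly this last step. Unlike the cases of Theorem \ref{6k-thm}, where every equality condition can be met simultaneously and the construction is explicit, here the Oler bound and the cap bound are tight on \emph{different} configurations for general triangles, so one cannot simply add two sharp inequalities. Controlling their sum, equivalently quantifying the area wasted by a non-equilateral shape or by an imperfectly filled boundary row, is precisely the delicate combinatorial-geometric content of Oler's resolution of the Erd\H{o}s--Oler conjecture for triangular numbers, and I expect it, rather than the setup above, to absorb essentially all of the work.
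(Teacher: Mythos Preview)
The statement you are attempting to prove is presented in the paper as a \emph{conjecture}, not a theorem: the paper explicitly remarks that its methods establish the claim only for $m\le 3$ (via Theorem~\ref{1-thm} and Theorem~\ref{6k-thm}(b)) and that the approach ``stopped at $m=4$'' because the Wegner packing of ten disks is not triangular. There is therefore no proof in the paper to compare your proposal against.

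Your setup is sound as far as it goes: the Steiner decomposition $\|T\|=\|K\|+p(K)+\pi+\|Cap_H(T)\|$ is correct, the Oler inequality is quoted in the right form, and your arithmetic verifies that the triangular configuration saturates both Oler and the target area. You also diagnose correctly why Theorem~\ref{n-thm} cannot reach $m\ge 4$. But the proposal does not prove the conjecture, and you say so yourself: the displayed inequality
\[\Bigl(1-\tfrac{\sqrt3}{2}\Bigr)p(K)+\|Cap_{H}(T)\|\ \ge\ 6\Bigl(1-\tfrac{\sqrt3}{2}\Bigr)(m-1)+3\sqrt3-\pi\]
is the entire content of the conjecture once the routine reductions are made, and you offer no argument for it beyond the hope of a ``corner-by-corner estimate.'' In particular, your proposed preliminary reduction --- ``showing that among minimal triangles one may take all three angles equal to $\pi/3$'' --- is precisely what the conjecture asserts and cannot be assumed.

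There is also a factual error in your last paragraph: there is no ``Oler's resolution of the Erd\H{o}s--Oler conjecture for triangular numbers.'' Oler's 1961 paper proves the packing inequality you invoke and \emph{poses} the problem; the paper under review calls the conjecture ``long standing'' for good reason. You therefore cannot defer the hard step to an existing reference. Your framework is reasonable and your identification of the obstruction is accurate, but the proposal leaves the essential difficulty untouched, which is consistent with the statement's status in the paper as an open problem.
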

In Theorem \ref{1-thm} and Theorem \ref{6k-thm} (b) we proved it for $m=1,2,3$, but our approach stopped at $m=4$ where the Wegner packing of $10$ disks is not triangular; see Fig. \ref{wegner-fig}. One can show that the smallest triangle containing two unit disks is the isosceles right triangle with hypothenuses of length $6+4\sqrt{2}$, i.e. it is not equilateral. However, it is plausible that the smallest triangle containing $\frac{m(m+1)}{2}-1$ unit disks with $m>2$ is equilateral. These lead to a new version of the long standing Erd\H{o}s-Oler conjecture \cite{oler}.
\begin{conjecture}
If $n>3$ is a triangular number, then the smallest triangle containing $n$ disks is the same as that containing $n-1$ disks. 
\end{conjecture}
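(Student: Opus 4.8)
The plan is to prove the two inequalities of the claimed equality separately, writing $\mathcal A(N)$ for the least area of a triangle containing $N$ non-overlapping unit disks and $T_{n}$ for the equilateral triangle of side $s_{n}=2(m-1)+2\sqrt{3}$, where $n=\tfrac{m(m+1)}{2}$. One direction is immediate from monotonicity: any triangle that contains $n$ disks contains any $n-1$ of them after deleting one, so $\mathcal A(n-1)\le\mathcal A(n)$. The whole content of the statement is therefore the reverse inequality $\mathcal A(n)\le\mathcal A(n-1)$. Taking a minimal-area triangle $T^{\ast}$ for $n-1$ disks, this reverse inequality is equivalent to the extension assertion that $T^{\ast}$ still has room for one more disk; equivalently, that $n-1$ disks never fit in a triangle of area strictly below $\mathcal A(n)$. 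To make either phrasing quantitative I would first pass to centres: if a triangle $T$ with inradius $\rho$ contains $N$ unit disks, their centres lie in the inwardly offset similar triangle $T^{-}$ of inradius $\rho-1$ and have pairwise distances at least $2$.

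The main tool I would use, in exactly the role that Theorem~\ref{wegner} plays in the proof of Theorem~\ref{n-thm}, is the Thue--Groemer--Oler inequality: for $N$ points with pairwise distances at least $2$ in a convex region of area $A$ and perimeter $p$ one has \[N\le\frac{A}{2\sqrt{3}}+\frac{p}{4}+1.\] Specialising to an equilateral $T$ of side $s$, so that $T^{-}$ has side $t=s-2\sqrt{3}$, this reads \[N\le\frac{t^{2}}{8}+\frac{3t}{4}+1,\] and a short computation shows the right-hand side equals exactly $n$ when $t=2(m-1)$, i.e. when $s=s_{n}$. Thus Oler's inequality is tight precisely at the triangular grid, which both identifies $\mathcal A(n)=\tfrac{\sqrt{3}}{4}s_{n}^{2}$ (granting that the optimal triangle is equilateral, the content of the preceding conjecture) and supplies the sharp lower bound I want. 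To remove the equilateral hypothesis I would argue that among triangles the equilateral one is extremal; since affine maps turn the disks into ellipses this reduction cannot be purely affine, and I would instead control the shape of an optimal triangle through Theorem~\ref{area} and Proposition~\ref{parallel}, which is in effect the equilateral claim itself.

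The difficulty, and the reason this is only a conjecture, surfaces at the final step. Feeding $N=n-1$ into the equilateral bound only forces $\tfrac{t^{2}}{8}+\tfrac{3t}{4}+1\ge n-1$, which holds for a value of $t$ strictly below $2(m-1)$; the inequality leaves slack of exactly one disk, so a bare area-and-perimeter count permits a triangle smaller than $T_{n}$ for $n-1$ disks. Closing this one-disk gap is the crux and demands a stability refinement of the Groemer--Oler inequality: one must show that any configuration whose centre count comes within one of the Oler bound is, up to a controlled perturbation, the triangular grid, and that every near-grid arrangement of $n-1$ disks leaves a corner vacancy into which the $n$-th disk fits. I would attempt this by induction on $m$, peeling the boundary row of $m$ disks along one side to relate the configuration to the $(m-1)$-st triangular problem, backed by a finite check of the base cases $m=3,4$. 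The genuine obstruction is ruling out exotic, defected interior rearrangements that might undercut the grid --- precisely the non-efficient packing phenomenon that Theorem~\ref{non-wegner} exhibits for a different container --- and it is this interior-rigidity step, not the area bookkeeping, that I expect to be the main obstacle and that has kept the Erd\H{o}s--Oler conjecture open.
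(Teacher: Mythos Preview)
The statement you were asked to prove is labelled \emph{Conjecture} in the paper and is presented without proof; it is offered in Section~\ref{final} as an open problem, a reformulation of the Erd\H{o}s--Oler conjecture. There is therefore no proof in the paper to compare your proposal against.

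Your proposal is not a proof either, and you say so yourself: the argument via Oler's inequality bounds $n-1$ centres in an equilateral triangle only by $\tfrac{t^{2}}{8}+\tfrac{3t}{4}+1\ge n-1$, which permits $t<2(m-1)$ and hence a triangle strictly smaller than $T_{n}$. That one-disk slack is precisely the obstruction, and you have identified it correctly. The remaining ingredients you invoke --- a stability version of Groemer--Oler, a row-peeling induction, and a rigidity statement ruling out defected interior packings --- are not established anywhere in the paper or, to my knowledge, in the literature at the required strength; they are exactly what has kept the conjecture open. Your reduction of the equilateral hypothesis to ``the content of the preceding conjecture'' is also circular as stated: Conjecture~1 in the paper is itself unproved for $m\ge 4$. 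So your write-up is a fair diagnosis of the difficulty rather than a proof, and in that respect it is consistent with the paper's own stance.
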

In Theorem \ref{6k-thm}, we showed that if $n$ is a centred hexagonal number, then the smallest containing hexagon is regular. Then, we can propose the following analogy of this conjecture. 
\begin{conjecture}
If $n\in\mathbb{N}$ is a centred hexagonal number, then the smallest hexagon containing $n$ disks is the same as that containing $n-1$ disks.
\end{conjecture}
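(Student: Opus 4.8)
Throughout write $n=3m(m-1)+1$ and let $R_m$ be the regular hexagon of side $2(m-1)+\frac{2}{\sqrt{3}}$ shown in Theorem~\ref{6k-thm} to be the smallest hexagon containing $n$ disks. The plan is to prove the two bounds separately. Since $R_m$ already encloses the centred-hexagonal Wegner packing of all $n$ disks, it encloses any $n-1$ of them, so the smallest hexagon containing $3m(m-1)$ disks has area at most $\|R_m\|$; the entire content of the conjecture is the reverse inequality $\|P\|\geq\|R_m\|$ for every convex hexagon $P$ containing $n-1$ disks. First I would record what Theorem~\ref{n-thm} yields for free. From $12n-3=(6m-3)^{2}$ one gets $\lceil\sqrt{12n-3}-3\rceil=6(m-1)$, and since $(6m-4)^{2}<12(n-1)-3=(6m-3)^{2}-12<(6m-3)^{2}$ for every $m\geq2$, the ceiling term is unchanged on passing from $n$ to $n-1$: $\lceil\sqrt{12(n-1)-3}-3\rceil=6(m-1)$ as well. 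Hence the Theorem~\ref{n-thm} bound for $n-1$ disks in a hexagon equals $\|R_m\|-\sqrt{12}=\|R_m\|-2\sqrt{3}$, falling short of the target by exactly one disk's worth of area.

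The natural way to recover the missing $2\sqrt{3}$ is a structural analysis of the extremal configurations, in the spirit of the Erd\H{o}s--Oler programme. Since $3m(m-1)\neq2$ cannot be linearly packed at the Wegner bound, a hexagon $P$ whose area approaches the Theorem~\ref{n-thm} bound for $n-1$ disks must circumscribe a Wegner packing whose centre-hull is one of the finitely many shapes---triangle, quadrilateral, pentagon or hexagon---enumerated in the proof of Theorem~\ref{6k-thm}(c). One would then hope to show that for each such shape the enclosing equiangular hexagon is forced to carry at least $2\sqrt{3}$ more area than the weak bound allows, pushing $\|P\|$ up to $\|R_m\|$.

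The main obstacle is that this hope is untenable, and identifying exactly why is the crux. Theorem~\ref{6k-thm}(c) itself shows that whenever $n-1=3m(m-1)$ is \emph{not} exceptional there is a Wegner packing of $n-1$ disks together with an equiangular hexagon attaining the Theorem~\ref{n-thm} bound, i.e. a hexagon of area $\|R_m\|-2\sqrt{3}<\|R_m\|$ that encloses $n-1$ disks. For $m=2,\dots,6$ one has $n-1\in\{6,18,36,60,90\}$, all below the smallest exceptional number $121$, so every such $n-1$ is non-exceptional and the statement already fails there; for instance at $m=3$ the smallest hexagon containing $18$ disks has area $24\sqrt{3}+24$, against $\|R_3\|=26\sqrt{3}+24$. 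In other words the flexibility of hexagons---the very flexibility exploited in Theorem~\ref{6k-thm}(c), and unavailable for triangles---lets one genuinely shrink the container after deleting a disk, so the conjecture as stated cannot hold. The honest repair is to add the hypothesis that $n-1$ be exceptional, so that no Wegner packing of $n-1$ disks exists; only then does the cap-deficit argument of the previous paragraph stand a chance of closing the $2\sqrt{3}$ gap, and even so one would still have to rule out every non-Wegner packing that comes within $2\sqrt{3}$ of the bound.
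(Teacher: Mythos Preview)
The statement you were asked to prove is not a theorem but an open conjecture: the paper offers no proof of it, so there is nothing in the paper to compare your argument against. More to the point, your proposal does not prove the conjecture either---it refutes it, and the refutation is correct.

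Your key observation is that for $n=3m(m-1)+1$ with $m\geq2$ the ceiling term $\lceil\sqrt{12(n-1)-3}-3\rceil$ equals $6(m-1)$, the same value as for $n$, so the Theorem~\ref{n-thm} lower bound drops by exactly $\sqrt{12}=2\sqrt{3}$ when one passes from $n$ to $n-1$ disks. Since $n-1=3m(m-1)$ is never $2$ and, for $m\le 6$, lies below the first exceptional number $121$, Theorem~\ref{6k-thm}(c) guarantees that this smaller bound is \emph{attained} by some hexagon. Concretely, at $m=2$ the smallest hexagon containing $7$ disks is $R_2$ with $\|R_2\|=8\sqrt{3}+12$, while the smallest hexagon containing $6$ disks (the triangular Wegner packing of Theorem~\ref{6k-thm}(b) with its corners truncated) has area $6\sqrt{3}+12$. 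These are not equal, so the conjecture fails already at its first nontrivial instance.

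This is a genuine correction to the paper: the analogy with the Erd\H{o}s--Oler conjecture breaks down precisely because hexagons, unlike triangles, are flexible enough to realise the Wegner bound for every non-exceptional $n$, so removing a disk always permits a strictly smaller container. Your suggested repair---restricting to centred hexagonal $n$ with $n-1$ exceptional---is the natural salvage, though as you note it would still require controlling near-optimal non-Wegner packings, and one would first have to check whether any number of the form $3m(m-1)$ is exceptional at all.
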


The following assertion would simplify some of our proofs. Let $P$ be the smallest convex $k$-gon containing a convex disk $C$. Let $P^{1}$ be the convex polygon obtained after cutting the largest (area) triangles from each cap of $P$ so that $P^{1}$ still contains $C$. Is it true that minimality of $P$ implies minimality of $P^{1}$? The answer is positive if $C$ is a unit disk, negative if $C$ is the convex hull of two tangent unit disks and $k=3$, but positive in the latter case when $k=4$ (see Theorem \ref{6k-thm} (a)). What if $C$ is an ellipse? Moreover, does minimality of both $P$ and $P^{1}$ imply minimality of members of the sequence of obtained by the 'greedy cut' procedure described?

\bigskip\noindent{\bf Acknowledgements}: We are grateful to participants of 2018 Fall Meeting of the Mongolian Mathematical Society for useful discussions and to Bilguun Bolortuya for research assitance. Financial support from the Asia Research Centre in Mongolia (P2018-3567) is acknowledged.

\end{document}